\theoremstyle{plain}
\newtheorem{corollary}{Corollary}
\newtheorem{definition}{Definition}
\newtheorem{proposition}{Proposition}
\numberwithin{equation}{section}
\begin{document}
\title[Clifford Analysis on Orlicz-Sobolev Spaces]{Clifford Analysis on
Orlicz-Sobolev Spaces}
\author{Dejenie Alemayehu Lakew}
\address{John Tyler Community College\\
Department of Mathematics\\
USA}
\email{dlakew@jtcc.edu}
\urladdr{http://www.jtcc.edu}
\author{Mulugeta Alemayehu Dagnaw}
\curraddr{Debre Tabor University\\
Department of Mathematics\\
Ethiopia}
\email{malemayehu3@gmail.com}
\urladdr{http://www.dtu.edu.et}
\date{September 13, 2014}
\subjclass[2000]{ Primary 30A05, 35G15, 46E35, 46F15, 46E35}
\keywords{Clifford analysis, Dirac operator, Orlicz spaces, Sobolev-Orlicz
spaces, Slobodeckji spaces}
\dedicatory{}
\thanks{This paper is in final form and no version of it will be submitted
for publication elsewhere.}

\begin{abstract}
In this article we develop few of the analogous theoretical results of
Clifford analysis over Orlicz-Sobolev spaces and study mapping properties of
the Dirac operator $D=\sum_{j=1}^{n}e_{j}\partial _{x_{j}}$ and the
Teodorescu transform $\tau _{\Omega }$ over these function spaces. We also
get analogous decomposition results $\tciLaplace ^{\psi }\left( \Omega
,Cl_{n}\right) =A^{\psi }\left( \Omega ,Cl_{n}\right) \dotplus \overline{D}%
\left( W_{0}^{1,\psi }\left( \Omega ,Cl_{n}\right) \right) $ of Clifford
valued Orlicz spaces and the generalized Orlicz - Sobolev spaces $W^{k,\psi
}\left( \Omega ,Cl_{n}\right) =A^{k,\psi }\left( \Omega ,Cl_{n}\right)
\dotplus \overline{D}\left( W_{0}^{k+1,\psi }\left( \Omega ,Cl_{n}\right)
\right) $ where $\psi $ is an Orlicz function and $k\in 
\mathbb{N}
\cup \{0\}$.
\end{abstract}

\maketitle

\section{$\mathbf{Introduction}$}

\textit{Clifford} analysis is a theoretical study of \textit{Clifford}
valued functions that are null solutions to the \textit{Dirac} or \textit{%
Dirac like} differential operators and their applications over the regular
continuous function spaces $C^{k}\left( \Omega ,Cl_{n}\right) $, \textit{%
Lipschitz} spaces $C^{k,\lambda }\left( \Omega ,Cl_{n}\right) $ and over 
\textit{Sobolev} and \textit{Slobodeckji} spaces $W^{k,p}\left( \Omega
,Cl_{n}\right) ,$ $W^{k+\lambda ,p}\left( \Omega ,Cl_{n}\right) $
respectively for $0<\lambda <1$. The latter spaces are the right viable
search spaces for solutions to most partial differential equations where we
seek functions that are weakly differentiable as regular functions are
scarce. All available literatures are done over function spaces I have
indicated and the domain $\Omega $ in most cases is a bounded or unbounded
but smooth region in \textit{Euclidean} spaces $%
\mathbb{R}
^{n}$ or a manifold in $%
\mathbb{R}
^{n}$ or domain manifold in $%
\mathbb{C}
^{n}$ with being \textit{Lipschitz}, the minimally smoothness condition. In
this paper we look at some analogous results of \textit{Clifford} analysis
over $Cl_{n}$- valued Orlicz and Orlicz - \textit{Sobolev} spaces such as $%
\tciLaplace ^{\psi }\left( \Omega ,Cl_{n}\right) $ and $W^{k,\psi }\left(
\Omega ,Cl_{n}\right) $ where $\psi $ is an \textit{Orlicz} or \textit{Young}
function.

\bigskip\ Let $\{e_{j}:j=1,2,...,n\}$ be an orthonormal basis for $%
\mathbb{R}
^{n}$ that is equipped with an inner product so that 
\begin{equation}
e_{i}e_{j}+e_{j}e_{i}=-2\delta _{ij}e_{0}  \label{inner product 1}
\end{equation}%
where $\delta _{ij}$ is the \textit{Kronecker delta}. The \textit{inner
product} defined satisfies an anti commutative relation 
\begin{equation}
x^{2}=-\Vert x\Vert ^{2}  \label{inner product 2}
\end{equation}

and with this\textit{\ inner product, } $%
\mathbb{R}
^{n}$ generates a $2^{n}$-dimensional non commutative algebra called \textit{%
Clifford} algebra which is denoted by $Cl_{n}$.

\ 

\textbf{Basis for }$Cl_{n}$ : \ The family 
\begin{equation*}
\{e_{A}:A\subset \{1<2<3<...<n\}\}
\end{equation*}

is a basis for the algebra. The object $e_{0}$ used above is the identity
element of the \textit{Clifford} algebra $Cl_{n}$. \ 

\ 

\textbf{Representation of elements of }$Cl_{n}$: Every element $a\in Cl_{n}$
is represented by%
\begin{equation}
a=\sum e_{A}a_{A}  \label{Clifford element}
\end{equation}%
where $a_{A}$ is a real number for each $A$.

\ 

\textbf{Embedding} : By identifying $x=(x_{1},x_{2},...,x_{n})\in 
\mathbb{R}
^{n}$ with $\sum_{j=1}^{n}e_{j}x_{j}$ of $Cl_{n}$ we have an embedding 
\begin{equation*}
\mathbb{R}
^{n}\hookrightarrow Cl_{n}
\end{equation*}

\textbf{Clifford conjugation:} $\ \overline{a}$ of a \textit{Clifford}
element $a=\sum e_{A}a_{A}\in Cl_{n}$ is defined as: 
\begin{equation*}
\overline{a}=\sum \overline{e}_{A}a_{A}
\end{equation*}%
where%
\begin{equation*}
\overline{e}_{A}=\overline{e_{j_{1}}...e}_{j_{r}}=\left( -1\right)
^{r}e_{j_{r}}...e_{j_{1}}
\end{equation*}

with particulars:

\begin{equation*}
\overline{e}_{j}=-e_{j},\text{ \ }e_{j}^{2}=-1
\end{equation*}
for $i,j=1,2,...,n$ and for 
\begin{equation*}
i\neq j:\overline{e_{i}e}_{j}=(-1)^{2}e_{j}e_{i}=e_{j}e_{i}
\end{equation*}

\begin{definition}
(Clifford norm) \bigskip For $a=\sum e_{A}a_{A}\in Cl_{n}$ we define the
Clifford norm of \ $a$ by 
\begin{equation}
\Vert a\Vert _{Cl_{n}}=\left( \left( a\overline{a}\right) _{0}\right) ^{%
\frac{1}{2}}=\left( \underset{A}{\sum a_{A}^{2}}\right) ^{\frac{1}{2}}
\label{Clifford Norm}
\end{equation}%
where $\left( a\right) _{0}$ is the real part of $a\overline{a}$.
\end{definition}

\ \ \ \ \ \ \ \ \ \ \ \ \ \ \ 

The Clifford norm $\Vert .\Vert _{Cl_{n}}$ satisfies the inequality: 
\begin{equation}
\Vert ab\Vert _{Cl_{n}}\leq c\left( n\right) \Vert a\Vert _{Cl_{n}}\Vert
b\Vert _{Cl_{n}}  \label{Norm Inequality}
\end{equation}%
with $c\left( n\right) $ a dimensional constant.

\ 

\textbf{Kelvin inversion:} Each non zero element $x\in 
\mathbb{R}
^{n}$ has an inverse given by :%
\begin{equation}
x^{-1}=\frac{\overline{x}}{\Vert x\Vert _{Cl_{n}}^{2}}  \label{inverse}
\end{equation}

$\sphericalangle $ \ In this paper $\Omega $ is a bounded and smooth domain
of $%
\mathbb{R}
^{n}$ with at least a $C^{1}$ - hypersurface boundary.

\ \ \ \ 

\textbf{Function representation:} \bigskip A $Cl_{n}$- valued function $%
f:\Omega \longrightarrow Cl_{n}$ has a representation:$\ $

\begin{equation}
\ f=\sum_{A}e_{A}f_{A}  \label{Clifford valued function}
\end{equation}%
where $f_{A}:\Omega \longrightarrow 
\mathbb{R}
$ is a real valued component or section of $f$.

\ \ 

\begin{definition}
Let $f\in C^{1}\left( \Omega \right) \cap C\left( \overline{\Omega }\right) $%
, we define the Dirac\textbf{\ }derivative of $f$ by 
\begin{equation}
Df\left( x\right) =\sum_{j=1}^{n}e_{j}\partial _{x_{j}}f\left( x\right)
\label{Dirac}
\end{equation}

A function $f:\Omega \longrightarrow Cl_{n}$ is called left\textbf{\ }%
monogenic or left\textbf{\ }Clifford\textbf{\ }analytic over $\Omega $ if 
\begin{equation*}
Df\left( x\right) =0,\text{ }\forall x\in \Omega
\end{equation*}%
and likewise it is called right\textbf{\ }monogenic over $\Omega $ if 
\begin{equation*}
f(x)D=\sum_{j=1}^{n}\partial _{x_{j}}f\left( x\right) e_{j}=0,\text{ }%
\forall x\in \Omega
\end{equation*}
\end{definition}

An example of both left and right \textit{monogenic} function defined over $%
\mathbb{R}
^{n}\backslash \{0\}$ is given by 
\begin{equation*}
\Phi \left( x\right) =\frac{\overline{x}}{\omega _{n}\Vert x\Vert
_{Cl_{n}}^{n}}
\end{equation*}%
where $\omega _{n}=\frac{2\pi ^{\frac{n}{2}}}{\Gamma \left( \frac{n}{2}%
\right) }$ is the surface area of the unit sphere in $%
\mathbb{R}
^{n}$.

\ \ 

The function $\Phi $\ is also a fundamental solution to the Dirac operator $%
D $ and we define integral transforms as convolutions of $\Phi $ with
functions of some spaces below.

\ 

\begin{definition}
Let $f\in C^{1}\left( \Omega ,Cl_{n}\right) \cap C\left( \overline{\Omega }%
,Cl_{n}\right) $. We define two integral transforms as follow:

\begin{equation}
\text{Teodorescu or Cauchy transform }\text{: \ \ \ \ \ \ \ }\text{\ \ }%
\zeta _{\Omega }f\left( x\right) =\int_{\Omega }\Phi \left( y-x\right)
f\left( y\right) d\Omega _{y}=\left( \Phi \ast f\right) \left( x\right) 
\text{ ,\ }x\in \Omega
\end{equation}

\begin{equation}
\text{Feuter transform }\text{: \ \ \ \ \ \ \ \ }\xi _{\partial \Omega
}f\left( x\right) =\int_{\partial \Omega }\Phi \left( y-x\right) \upsilon
\left( y\right) f\left( y\right) d\partial \Omega _{y}=\left( \Phi \ast
\upsilon f\right) \left( x\right) \text{ , \ }x\notin \partial \Omega
\end{equation}

where $\upsilon \left( y\right) $ is a unit normal vector pointing outward
at $y\in \partial \Omega $ and "$\ast $" is a convolution.
\end{definition}

\ \ \ \ \ \ \ \ \ \ \ \ \ \ \ \ \ \ \ 

These transforms will also be extended to hold over Sobolev spaces $%
W^{k,p}\left( \Omega ,Cl_{n}\right) $ by continuity and denseness arguments.

\section{$\mathbf{Cl}_{n}\mathbf{-Valued\ Orlicz\ and\
Orlicz-Sobolev-Slobodeckji\ Spaces}$}

The function spaces we use in this paper are Cifford algebra valued Orlicz-
Sobolev - Slobodeckji spaces. We therefore start with the definition of
these spaces.

\ 

\begin{definition}
A function $\psi :[0,\infty )\longrightarrow \lbrack 0,\infty )$ is said to
be an Orlicz function if $\psi \left( 0\right) =0,\underset{x\longrightarrow
\infty }{\lim }\psi \left( x\right) =\infty $ and $\psi \nearrow $ and
convex on its domain.

An example of such a function is :$\psi \left( x\right) =\mid x\mid ^{2}$
and $\psi \left( x\right) =\mid x\mid ^{p}$ for $1<p<\infty $.
\end{definition}

\begin{definition}
Let $\psi :[0,\infty )\longrightarrow \lbrack 0,\infty )$ be an Orlicz
function. A measurable, locally integrable function $f\in \tciLaplace _{%
\text{loc}}\left( \Omega ,%
\mathbb{R}
\right) $ is said to belong to the Orlicz space $\tciLaplace ^{\psi }\left(
\Omega ,%
\mathbb{R}
\right) $ if 
\begin{equation*}
\exists \beta >0:\int_{\Omega }\psi \left( \frac{\mid f(x)\mid }{\beta }%
\right) d\Omega _{x}<\infty
\end{equation*}

We thus define the Orlicz space\textbf{\ }$\tciLaplace ^{\psi }\left( \Omega
,%
\mathbb{R}
\right) $ as\textbf{\ } 
\begin{equation*}
\tciLaplace ^{\psi }\left( \Omega ,%
\mathbb{R}
\right) =\{f\in \tciLaplace _{\text{loc}}\left( \Omega ,%
\mathbb{R}
\right) :\exists \beta >0:\int_{\Omega }\psi \left( \frac{\mid f(x)\mid }{%
\beta }\right) d\Omega _{x}<\infty \}
\end{equation*}%
with a norm called Luxembourg norm defined as : 
\begin{equation}
\Vert f\Vert _{\tciLaplace ^{\psi }\left( \Omega ,%
\mathbb{R}
\right) }=\inf \{\beta >0:\int_{\Omega }\psi \left( \frac{\mid f(x)\mid }{%
\beta }\right) d\Omega _{x}\leq 1\}  \label{luxemberg norm1}
\end{equation}
\end{definition}

\bigskip\ \ \ \ 

The \textit{Orlicz} power functions $\psi \left( x\right) =\mid x\mid ^{p}$
for $1<p<\infty $ provide the usual \textit{Lebesgue spaces} $\tciLaplace
^{p}\left( \Omega \right) $.

\ 

The theme here is to work Clifford analysis over such function spaces and
develop analogous results we have on the usual regular, Lebesgue and Sobolev
spaces. We start by defining how Clifford valued functions be in Orlicz
spaces.

\ 

\begin{definition}
A $Cl_{n}$-valued measurable and locally integrable function $%
f=\dsum\limits_{A}e_{A}f_{A}$ over $\Omega $ is said to be in the Orlicz
space 
\begin{equation*}
f\in \tciLaplace ^{\psi }\left( \Omega ,Cl_{n}\right) \Leftrightarrow
f_{A}\in \tciLaplace ^{\psi }\left( \Omega ,%
\mathbb{R}
\right)
\end{equation*}

with \textit{Clifford-Luxembourg norm}: 
\begin{equation}
\Vert f\Vert _{\tciLaplace ^{\psi }\left( \Omega ,Cl_{n}\right)
}=\dsum\limits_{A}\Vert f_{A}\Vert _{\tciLaplace ^{\psi }\left( \Omega ,%
\mathbb{R}
\right) }  \label{Clifford-luxemberg norm}
\end{equation}
\end{definition}

The \textit{Clifford-Luxembourg norm} of $f$ is defined interns of the 
\textit{Luxembourg norm} of component real valued functions $f_{A}$.

\ \ 

We next define the $Cl_{n}-$ valued Orlicz-Sobolev spaces.

\ 

\begin{definition}
Let $\psi $ be an Orlicz function and $k\in 
\mathbb{N}
\cup \{0\}$. We define the Orlicz-Sobolev space $W^{k,\psi }\left( \Omega
,Cl_{n}\right) $ as 
\begin{equation*}
W^{k,\psi }\left( \Omega ,Cl_{n}\right) =\{f\in \tciLaplace _{loc}\left(
\Omega ,Cl_{n}\right) :\left( \forall A\right) \left( \exists \beta
_{A}>0\right) :\dsum\limits_{0\leq \mid \alpha \mid \leq k}\int_{\Omega
}\psi \left( \frac{\mid D^{\alpha }f_{A}\left( x\right) \mid }{\beta _{A}}%
\right) d\Omega _{x}<\infty \}
\end{equation*}

with norm (Clifford-Luxembourg ) 
\begin{equation}
\parallel f\parallel _{W^{k,\psi }\left( \Omega ,Cl_{n}\right)
}=\dsum\limits_{A}\dsum\limits_{0\leq \mid \alpha \mid \leq k}\parallel
f_{A}\parallel _{_{\tciLaplace ^{\psi }\left( \Omega ,%
\mathbb{R}
\right) }}=\dsum\limits_{A}\parallel f_{A}\parallel _{W^{k,\psi }\left(
\Omega ,%
\mathbb{R}
\right) }
\end{equation}

where $\ $%
\begin{equation*}
\parallel f_{A}\parallel _{W^{k,\psi }\left( \Omega ,%
\mathbb{R}
\right) }:=\inf \{\beta _{A}>0:\dsum\limits_{0\leq \mid \alpha \mid \leq
k}\int_{\Omega }\psi \left( \frac{\mid D^{\alpha }f_{A}\left( x\right) \mid 
}{\beta _{A}}\right) d\Omega _{x}\leq 1\}\text{ \ \ }
\end{equation*}

When $k=0$ we have $\tciLaplace ^{\psi }\left( \Omega ,Cl_{n}\right) $ and 
\begin{equation*}
f\in \tciLaplace ^{\psi }\left( \Omega ,Cl_{n}\right) \Longleftrightarrow
f_{A}\in \tciLaplace ^{\psi }\left( \Omega ,%
\mathbb{R}
\right)
\end{equation*}%
with 
\begin{equation}
\parallel f\parallel _{\tciLaplace ^{\psi }\left( \Omega ,Cl_{n}\right)
}=\dsum\limits_{A}\inf \{\lambda _{A}>0:\int_{\Omega }\psi \left( \frac{\mid
f_{A}(x)\mid }{\lambda _{A}}\right) d\Omega _{x}\leq
1\}=\dsum\limits_{A}\parallel f_{A}\parallel _{\tciLaplace ^{\psi }\left(
\Omega ,%
\mathbb{R}
\right) }  \label{Clifford-luxemberg norm2}
\end{equation}

We also define traceless Sobolev spaces as

\begin{equation*}
W_{0}^{k,\psi }\left( \Omega ,Cl_{n}\right) :=\{f\in W^{k,\psi }\left(
\Omega ,Cl_{n}\right) :f_{\mid \partial \Omega }=\underset{1\leq \left\vert
\alpha \right\vert \leq k-1}{D^{\alpha }f_{\mid \partial \Omega }}=0\}
\end{equation*}
\end{definition}

The generalized Orlicz-Slobodeckji spaces are defined as

\begin{definition}
The Orlicz - Slobodeckji spaces 
\begin{equation*}
\widetilde{W}^{k-1,\psi ,\psi }\left( \partial \Omega ,Cl_{n}\right)
:=\{g=\tau f:f\in W^{k,\psi }\left( \Omega ,Cl_{n}\right) \}
\end{equation*}
with associated norm :

\begin{eqnarray*}
\Vert g\Vert _{\widetilde{W}^{k-1,\psi ,\psi }\left( \partial \Omega
,Cl_{n}\right) } &=&\underset{\Vert \alpha \Vert \leq k-1}{\sum }%
\int_{\partial \Omega }\psi \left( \frac{\left\vert \left( D^{\alpha
}g|\right) \right\vert }{\lambda }\right) d\partial \Omega _{x} \\
&&+\underset{\Vert \alpha \Vert =k-1}{\sum }\dint\limits_{\partial \Omega
}\dint\limits_{\partial \Omega }\psi \left( \frac{|D^{\alpha }g\left(
x\right) -D^{\alpha }g\left( y\right) |}{\lambda |x-y|}\right) \left\vert
x-y\right\vert ^{2-n}d\partial \Omega _{x}d\partial \Omega _{y}
\end{eqnarray*}

when $k=1$, we have 
\begin{equation*}
\widetilde{W}^{0,\psi ,\psi }\left( \partial \Omega ,Cl_{n}\right)
=\tciLaplace ^{\psi ,\psi }\left( \partial \Omega ,Cl_{n}\right)
\end{equation*}

These Orlicz-Slobodeckji spaces are analogues of the Sobolev-Slobodeckji
spaces

\begin{equation*}
W^{k-\frac{1}{p},p}\left( \partial \Omega ,Cl_{n}\right) :=\{g=\tau f:f\in
W^{k,p}\left( \Omega ,Cl_{n}\right) \}
\end{equation*}%
for $k\in 
\mathbb{N}
$.
\end{definition}

\begin{proposition}
The Slobodeckji space $W^{1-\frac{1}{p},p}\left( \partial \Omega \right) $
with $\lambda =1-\frac{1}{p}$ so that $\lfloor \lambda \rfloor =0$ and $%
\{\lambda \}=1-\frac{1}{p}$ and for $f\in W^{1-\frac{1}{p},p}\left( \partial
\Omega \right) $ we have 
\begin{equation*}
\parallel f\parallel _{W^{1-\frac{1}{p},p}\left( \partial \Omega \right) }=%
\text{ }\parallel f\parallel _{\tciLaplace ^{p}\left( \partial \Omega
\right) }+\left( \dint\limits_{\partial \Omega }\dint\limits_{\partial
\Omega }\left( \frac{|f\left( x\right) -f\left( y\right) |}{\left\vert
x-y\right\vert }\right) ^{p}\left\vert x-y\right\vert ^{2-n}d\partial \Omega
_{x}d\partial \Omega _{y}\right) ^{\frac{1}{p}}
\end{equation*}
\end{proposition}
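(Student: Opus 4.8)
The plan is to obtain the displayed formula as a direct specialization of the general Orlicz--Slobodeckji norm recorded above: take the Orlicz function to be the power function $\psi(t)=|t|^{p}$, the fractional smoothness index to be $\lambda=1-\frac{1}{p}$, and $k=1$, and then keep careful track of how the exponent of the singular kernel depends on the dimension of $\partial\Omega$.

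First I would note that, since $1<p<\infty$, we have $0<1-\frac{1}{p}<1$, hence $\lfloor\lambda\rfloor=0$ and $\{\lambda\}=1-\frac{1}{p}$, as asserted. Because $\lfloor\lambda\rfloor=0$, the integer-order part of the Slobodeckji norm ranges only over multi-indices $\alpha$ with $|\alpha|\le 0$; no genuine derivative of $f$ occurs there, and that part is just the $\tciLaplace^{p}(\partial\Omega)$ norm of $f$. To see that the Luxembourg functional for $\psi(t)=|t|^{p}$ is the usual $L^{p}$ norm, observe that $\int_{\partial\Omega}\bigl(|f|/\beta\bigr)^{p}\,d\partial\Omega_{x}\le 1$ holds precisely when $\beta\ge\bigl(\int_{\partial\Omega}|f|^{p}\,d\partial\Omega_{x}\bigr)^{1/p}$, so the infimum equals $\Vert f\Vert_{\tciLaplace^{p}(\partial\Omega)}$; the identical homogeneity argument turns the Orlicz double-integral term into the $p$-th root of an ordinary double integral. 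Thus the whole Orlicz--Slobodeckji norm collapses to a sum of an $L^{p}$ norm and a Gagliardo-type seminorm.

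Next I would fix the kernel exponent, which is the only point requiring real care. Since $\Omega$ is bounded and smooth with a $C^{1}$ boundary, $\partial\Omega$ is an $(n-1)$-dimensional hypersurface carrying $(n-1)$-dimensional surface measure, so the Slobodeckji seminorm of fractional order $s=\{\lambda\}$ on $\partial\Omega$ carries the weight $|x-y|^{-((n-1)+sp)}$. With $s=1-\frac{1}{p}$ this exponent is $(n-1)+sp=(n-1)+(p-1)=n+p-2$, so the weight equals $|x-y|^{2-n-p}$. Writing $|x-y|^{2-n-p}=|x-y|^{-p}\,|x-y|^{2-n}$ and absorbing the first factor into the difference quotient yields exactly
\[
\Bigl(\frac{|f(x)-f(y)|}{|x-y|}\Bigr)^{p}\,|x-y|^{2-n},
\]
so the double integral in the statement is precisely the (normalized) fractional seminorm term. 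Taking $p$-th roots and adding the $L^{p}$ part gives the claimed identity.

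Finally, to reconcile this with the abstract description of $W^{1-\frac{1}{p},p}(\partial\Omega)$ as the trace space $\{g=\tau f:f\in W^{1,p}(\Omega,Cl_{n})\}$, I would invoke Gagliardo's trace theorem: on a $C^{1}$ domain the trace map $\tau:W^{1,p}(\Omega)\to W^{1-\frac{1}{p},p}(\partial\Omega)$ is bounded and surjective, and the quotient norm it induces is equivalent to the Gagliardo norm above, so both descriptions define the same space. The main obstacle is indeed the bookkeeping in the previous paragraph --- it is exactly there that the boundary dimension $n-1$ rather than $n$ enters --- together with the minor point of whether one wants an honest equality of norms or only equivalence; I would settle this by adopting the Gagliardo expression as the definition of the norm on $W^{1-\frac{1}{p},p}(\partial\Omega)$, so that the proposition becomes precisely the specialization computation above, with the trace theorem contributing only the identification of the underlying function space.
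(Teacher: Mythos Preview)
Your proof is correct and follows essentially the same route as the paper: both reduce the claim to the exponent bookkeeping $(n-1)+\{\lambda\}p=(n-1)+(p-1)=n+p-2$, whence $|x-y|^{-(n+p-2)}=|x-y|^{-p}\,|x-y|^{2-n}$, and then invoke the general Slobodeckji norm definition. Your additional remarks on the Luxembourg-to-$L^{p}$ reduction and on Gagliardo's trace theorem are not present in the paper's (very short) proof, but they are harmless elaborations rather than a different argument.
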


\begin{proof}
The proof is short and straight forward by considering $\{\lambda \}=1-\frac{%
1}{p}$, $[\lambda ]=0$ so that the singularity exponent of the integrand
wiil be%
\begin{eqnarray*}
\left\vert x-y\right\vert ^{-\left( \dim (\partial \Omega )+\{\lambda
\}p\right) } &=&\left\vert x-y\right\vert ^{-\left( n-1+\{\lambda \}p\right)
} \\
&=&\left\vert x-y\right\vert ^{-\left( n-1+\left( 1-\frac{1}{p}\right)
p\right) } \\
&=&\left\vert x-y\right\vert ^{-\left( n-1+p-1\right) } \\
&=&\frac{\left\vert x-y\right\vert ^{2-n}}{\left\vert x-y\right\vert ^{p}}
\end{eqnarray*}

which provides the factor expression of the integrand of the right term of
the right hand side of the two summands of the norm and the actual norm
follows form the definition of norm of Slobodeckji space $W^{\lambda
,p}\left( \partial \Omega \right) $.

\ 
\end{proof}

\begin{proposition}
The Orlicz-Slobodeckji space $\tciLaplace ^{\psi ,\psi }\left( \partial
\Omega ,Cl_{n}\right) $ has the following norm: for $f\in \tciLaplace ^{\psi
,\psi }\left( \partial \Omega ,Cl_{n}\right) $, 
\begin{equation*}
\parallel f\parallel _{\tciLaplace ^{\psi ,\psi }\left( \partial \Omega
\right) }=\parallel f\parallel _{\tciLaplace ^{\psi }\left( \partial \Omega
,Cl_{n}\right) }+\dint\limits_{\partial \Omega }\dint\limits_{\partial
\Omega }\psi \left( \frac{\left\vert f(x)-f(y)\right\vert }{\lambda
\left\vert x-y\right\vert }\right) \left\vert x-y\right\vert ^{2-n}d\partial
\Omega _{x}d\partial \Omega _{y}
\end{equation*}

with $\lambda >0$.
\end{proposition}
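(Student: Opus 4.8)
The plan is to read off the asserted formula directly from the general definition of the Orlicz--Slobodeckji space $\widetilde{W}^{k-1,\psi ,\psi }\left( \partial \Omega ,Cl_{n}\right) $ by specializing to $k=1$, exactly in the spirit of the preceding proposition for the classical Slobodeckji space. First I would set $k=1$, so that $k-1=0$; then the only multi-index $\alpha $ satisfying $\Vert \alpha \Vert \leq k-1$ (equivalently $\Vert \alpha \Vert =k-1$) is $\alpha =0$, and for this choice $D^{\alpha }g=g$. Hence both sums in the definition of $\Vert g\Vert _{\widetilde{W}^{k-1,\psi ,\psi }}$ reduce to a single summand each, and there is nothing left to sum over but the identity differential operator.

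Next I would identify these two surviving terms with the two terms in the claimed norm. The first, $\int_{\partial \Omega }\psi \left( \tfrac{\mid g(x)\mid }{\lambda }\right) d\partial \Omega _{x}$, is the integral whose infimum over admissible $\lambda >0$ defines the Luxembourg-type Orlicz norm; passing to the real components $g_{A}=f_{A}$ and summing over $A$ recovers exactly $\Vert f\Vert _{\tciLaplace ^{\psi }\left( \partial \Omega ,Cl_{n}\right) }$ in the sense of the Clifford--Luxembourg norm. The second term, the one coming from $\alpha =0$, is already the Gagliardo-type double integral $\int_{\partial \Omega }\int_{\partial \Omega }\psi \left( \tfrac{\mid f(x)-f(y)\mid }{\lambda \mid x-y\mid }\right) \mid x-y\mid ^{2-n}d\partial \Omega _{x}d\partial \Omega _{y}$ appearing in the statement, the weight $\mid x-y\mid ^{2-n}$ being precisely the one prescribed by the definition for $\dim \left( \partial \Omega \right) =n-1$. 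Since the definition also records the identification $\widetilde{W}^{0,\psi ,\psi }\left( \partial \Omega ,Cl_{n}\right) =\tciLaplace ^{\psi ,\psi }\left( \partial \Omega ,Cl_{n}\right) $, the two norms coincide by construction.

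The only point deserving care --- and the closest thing to an obstacle --- is bookkeeping on the parameter $\lambda $: one must agree to read the first summand as $\inf \{\lambda >0:\sum_{A}\int_{\partial \Omega }\psi (\mid f_{A}\mid /\lambda )\leq 1\}$ so that it genuinely equals $\Vert f\Vert _{\tciLaplace ^{\psi }\left( \partial \Omega ,Cl_{n}\right) }$, and to interpret the $\lambda $ in the double integral as the corresponding normalizing constant for the seminorm part, consistently with the way the norm on $\widetilde{W}^{k-1,\psi ,\psi }$ was written. With that convention fixed, no estimates are needed and the proof is immediate.
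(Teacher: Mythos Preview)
Your approach is correct and is exactly the intended one: the paper gives no separate proof for this proposition, treating it as an immediate specialization of the general definition of $\widetilde{W}^{k-1,\psi ,\psi }\left( \partial \Omega ,Cl_{n}\right)$ to the case $k=1$, just as you describe. Your remarks about the bookkeeping on $\lambda$ are a fair caveat, but they reflect an ambiguity already present in the paper's formulation of the general norm rather than a gap in your argument.
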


\section{$\mathbf{Mapping\ Properties\ of\ D,\mathbf{\protect\zeta }_{\Omega
}\ and\ \protect\zeta }_{\partial \Omega }$}

The three operators, the Dirac operator $D$, the Teodorescu or Cauchy
transform $\zeta _{\Omega }$ and the Feuter transform $\ \mathbf{\zeta }%
_{\partial \Omega }\mathbf{\ }$keep integrability invariant but change
regularity (smoothness) over Sobolev spaces in the following ways:

\ 

\begin{proposition}
The Dirac operator $D:W^{k,\psi }\left( \Omega ,Cl_{n}\right)
\longrightarrow W^{k-1,\psi }\left( \Omega ,Cl_{n}\right) $ with

\begin{equation*}
\Vert Df\Vert _{W^{k-1,\psi }\left( \Omega ,Cl_{n}\right) }\leq \gamma \Vert
f\Vert _{W^{k,\psi }\left( \Omega ,Cl_{n}\right) }
\end{equation*}%
for $\gamma =\gamma \left( n,\psi ,\Omega \right) $ a positive constant.
\end{proposition}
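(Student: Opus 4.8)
The plan is to reduce the whole estimate to the scalar (real-valued) components, exploiting that the Clifford--Luxembourg norm of $W^{k,\psi}(\Omega,Cl_n)$ is by definition the sum of the real Orlicz--Sobolev norms of the components, so the vector-valued inequality becomes finitely many one-dimensional inequalities plus bookkeeping. Throughout I take $k\ge 1$, so that $W^{k-1,\psi}$ is one of the spaces already defined. First I would write, for $f=\sum_A e_A f_A\in W^{k,\psi}(\Omega,Cl_n)$,
\[
Df=\sum_{j=1}^{n}\sum_{A}e_{j}e_{A}\,\partial_{x_{j}}f_{A},
\]
and note that by \eqref{inner product 1} each product $e_{j}e_{A}$ equals $\pm e_{B}$ for a single basis index $B=B(j,A)$. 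Collecting terms with the same $B$ gives $(Df)_{B}=\sum_{(j,A):\,e_{j}e_{A}=\pm e_{B}}(\pm 1)\,\partial_{x_{j}}f_{A}$, a finite $\mathbb{R}$-linear combination of first partials of the components of $f$, with at most $n$ summands for each fixed $B$ and at most $n\,2^{n}$ summands in all. Since each $f_{A}\in W^{k,\psi}(\Omega,\mathbb{R})$ with $k\ge 1$ possesses a weak gradient, $Df$ is a well-defined element of $\tciLaplace_{\text{loc}}(\Omega,Cl_{n})$.

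Next I would prove the scalar monotonicity step: if $g\in W^{k,\psi}(\Omega,\mathbb{R})$ then $\partial_{x_{j}}g\in W^{k-1,\psi}(\Omega,\mathbb{R})$ with $\parallel \partial_{x_{j}}g\parallel_{W^{k-1,\psi}(\Omega,\mathbb{R})}\le\parallel g\parallel_{W^{k,\psi}(\Omega,\mathbb{R})}$. This is immediate from the defining infimum: for any multi-index $\alpha$ with $|\alpha|\le k-1$ one has $D^{\alpha}(\partial_{x_{j}}g)=D^{\gamma}g$, where $\gamma$ is $\alpha$ with its $j$-th entry increased by one, so $|\gamma|\le k$; hence
\[
\sum_{0\le|\alpha|\le k-1}\int_{\Omega}\psi\!\left(\frac{|D^{\alpha}(\partial_{x_{j}}g)(x)|}{\beta}\right)d\Omega_{x}\le\sum_{0\le|\gamma|\le k}\int_{\Omega}\psi\!\left(\frac{|D^{\gamma}g(x)|}{\beta}\right)d\Omega_{x},
\]
so every $\beta>0$ admissible for $g$ in $W^{k,\psi}$ is admissible for $\partial_{x_{j}}g$ in $W^{k-1,\psi}$, and the infima compare in the asserted direction.

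Then I would assemble the pieces. By the definition of the Clifford--Luxembourg norm, $\parallel Df\parallel_{W^{k-1,\psi}(\Omega,Cl_{n})}=\sum_{B}\parallel (Df)_{B}\parallel_{W^{k-1,\psi}(\Omega,\mathbb{R})}$, and since $\parallel\cdot\parallel_{W^{k-1,\psi}(\Omega,\mathbb{R})}$ is a genuine norm (subadditivity coming from convexity of $\psi$), the triangle inequality applied to the finite sum defining $(Df)_{B}$, together with the scalar step, gives
\[
\parallel (Df)_{B}\parallel_{W^{k-1,\psi}(\Omega,\mathbb{R})}\le\sum_{(j,A)}\parallel \partial_{x_{j}}f_{A}\parallel_{W^{k-1,\psi}(\Omega,\mathbb{R})}\le\sum_{(j,A)}\parallel f_{A}\parallel_{W^{k,\psi}(\Omega,\mathbb{R})}\le n\sum_{A}\parallel f_{A}\parallel_{W^{k,\psi}(\Omega,\mathbb{R})}.
\]
Summing over the $2^{n}$ indices $B$ yields $\parallel Df\parallel_{W^{k-1,\psi}(\Omega,Cl_{n})}\le n\,2^{n}\parallel f\parallel_{W^{k,\psi}(\Omega,Cl_{n})}$, which is the claim with $\gamma=\gamma(n)$, hence a fortiori with $\gamma=\gamma(n,\psi,\Omega)$; if one prefers to route the multiplications through the algebra inequality \eqref{Norm Inequality}, the constant $c(n)$ enters at exactly this place. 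The routine parts are the sign bookkeeping $e_{j}e_{A}=\pm e_{B}$ and the subadditivity of the Luxembourg functional; the only load-bearing point is the scalar monotonicity $\parallel \partial_{x_{j}}g\parallel_{W^{k-1,\psi}}\le\parallel g\parallel_{W^{k,\psi}}$, and the main (mild) obstacle is just being careful that the nesting $\{|\alpha|\le k-1\}\hookrightarrow\{|\gamma|\le k\}$ is genuinely term-by-term after the shift $\alpha\mapsto\gamma$, so that no cross terms or extra constants appear at the scalar level.
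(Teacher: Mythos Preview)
Your argument is correct and rests on the same index--shift idea the paper uses: applying $D$ once and then taking weak derivatives of order at most $k-1$ produces only derivatives of order at most $k$, so the $W^{k-1,\psi}$ norm of $Df$ is dominated by the $W^{k,\psi}$ norm of $f$. The paper's own proof is a two--line sketch that stays at the level of the Clifford--valued Orlicz norm, writing
\[
\Vert Df\Vert_{W^{k-1,\psi}(\Omega,Cl_n)}=\sum_{0\le|\alpha|\le k-1}\Vert D^{\alpha}(Df)\Vert_{\tciLaplace^{\psi}(\Omega,Cl_n)}\le\sum_{0\le|\beta|\le k}\Vert D^{\beta}f\Vert_{\tciLaplace^{\psi}(\Omega,Cl_n)}\le\gamma\Vert f\Vert_{W^{k,\psi}(\Omega,Cl_n)},
\]
without unpacking the components or tracking the constant. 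Your version decomposes $Df$ into its real sections $(Df)_B$, proves the scalar monotonicity $\Vert\partial_{x_j}g\Vert_{W^{k-1,\psi}}\le\Vert g\Vert_{W^{k,\psi}}$ directly from the Luxembourg infimum, and then reassembles via the triangle inequality; this gives an explicit constant $\gamma=n\,2^{n}$ depending only on $n$, which the paper does not extract. So the two arguments are the same in spirit; yours is simply a careful componentwise fleshing--out of the paper's sketch.
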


\ 

\begin{proof}
Let $f\in W^{k,\psi }\left( \Omega ,Cl_{n}\right) $. We need to show that

\begin{eqnarray*}
\Vert Df\Vert _{W^{k-1,\psi }\left( \Omega ,Cl_{n}\right) }
&=&\dsum\limits_{0\leq \mid \alpha \mid \leq k-1}\parallel D^{\alpha }\left(
Df\right) \parallel _{\tciLaplace ^{\psi }\left( \Omega ,Cl_{n}\right) } \\
&=&\dsum\limits_{0\leq \mid \beta \mid \leq k}\parallel D^{\beta }f\parallel
_{\tciLaplace ^{\psi }\left( \Omega ,Cl_{n}\right) } \\
&\leq &\gamma \parallel f\parallel _{W^{k,\psi }\left( \Omega ,Cl_{n}\right)
}
\end{eqnarray*}
\end{proof}

\ \ \ \ \ \ \ \ \ \ \ \ \ \ \ \ \ \ \ 

\begin{proposition}
$D:\tciLaplace ^{\psi }\left( \Omega \right) \longrightarrow W^{-1,\psi
}\left( \Omega \right) $ where $\psi $ is an Orlicz function.
\end{proposition}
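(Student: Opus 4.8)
The plan is to read $W^{-1,\psi }\left( \Omega ,Cl_{n}\right) $ in the usual way, as the space of $Cl_{n}$-valued distributions $T$ on $\Omega $ admitting a (non-unique) representation
\begin{equation*}
T=g_{0}+\sum_{j=1}^{n}\partial _{x_{j}}g_{j},\qquad g_{0},g_{1},\dots ,g_{n}\in \tciLaplace ^{\psi }\left( \Omega ,Cl_{n}\right) ,
\end{equation*}
with quotient norm $\Vert T\Vert _{W^{-1,\psi }}=\inf \sum_{j=0}^{n}\Vert g_{j}\Vert _{\tciLaplace ^{\psi }\left( \Omega ,Cl_{n}\right) }$ taken over all such representations, which, under the customary growth restriction on $\psi $, coincides with the dual space of $W_{0}^{1,\widetilde{\psi }}\left( \Omega ,Cl_{n}\right) $, $\widetilde{\psi }$ being the complementary Young function of $\psi $. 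Under this reading the assertion is essentially structural, so the work is to exhibit $Df$ as one such sum and to estimate its norm. (Here $\tciLaplace ^{\psi }\left( \Omega \right) $ is understood as $\tciLaplace ^{\psi }\left( \Omega ,Cl_{n}\right) $, since $D$ acts on $Cl_{n}$-valued fields.)

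First I would fix $f=\sum_{A}e_{A}f_{A}\in \tciLaplace ^{\psi }\left( \Omega ,Cl_{n}\right) $. Each real component $f_{A}\in \tciLaplace ^{\psi }\left( \Omega ,\mathbb{R}\right) $ is locally integrable and so has well-defined distributional partials $\partial _{x_{j}}f_{A}$; hence $Df=\sum_{j=1}^{n}e_{j}\partial _{x_{j}}f$ is a well-defined $Cl_{n}$-valued distribution on $\Omega $. The algebraic remark to record is that left multiplication by a constant unit vector $e_{j}$ preserves the Orlicz class: the norm inequality $\left( \ref{Norm Inequality}\right) $, read componentwise, gives $\Vert e_{j}f\Vert _{\tciLaplace ^{\psi }\left( \Omega ,Cl_{n}\right) }\leq c\left( n\right) \Vert f\Vert _{\tciLaplace ^{\psi }\left( \Omega ,Cl_{n}\right) }$, so $e_{j}f\in \tciLaplace ^{\psi }\left( \Omega ,Cl_{n}\right) $. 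Since $\partial _{x_{j}}$ commutes with left multiplication by the constant $e_{j}$, we may rewrite, in the sense of distributions,
\begin{equation*}
Df=\sum_{j=1}^{n}\partial _{x_{j}}\left( e_{j}f\right) ,
\end{equation*}
which is precisely a representation of the type above with $g_{0}=0$ and $g_{j}=e_{j}f\in \tciLaplace ^{\psi }\left( \Omega ,Cl_{n}\right) $. Therefore $Df\in W^{-1,\psi }\left( \Omega ,Cl_{n}\right) $, with $\Vert Df\Vert _{W^{-1,\psi }}\leq \sum_{j=1}^{n}\Vert e_{j}f\Vert _{\tciLaplace ^{\psi }\left( \Omega ,Cl_{n}\right) }\leq n\,c\left( n\right) \Vert f\Vert _{\tciLaplace ^{\psi }\left( \Omega ,Cl_{n}\right) }$, which also shows the map is bounded.

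If one prefers to argue through the dual description, the estimate I would carry out is to pair $Df$ with $\varphi \in C_{0}^{\infty }\left( \Omega ,Cl_{n}\right) $, transfer the derivatives onto $\varphi $ by integration by parts (the boundary term vanishing by compact support), and bound the result componentwise by the H\"{o}lder inequality for Orlicz spaces, $\int_{\Omega }\left\vert uv\right\vert d\Omega _{x}\leq 2\Vert u\Vert _{\tciLaplace ^{\psi }}\Vert v\Vert _{\tciLaplace ^{\widetilde{\psi }}}$, yielding $\left\vert \left\langle Df,\varphi \right\rangle \right\vert \leq C\left( n\right) \Vert f\Vert _{\tciLaplace ^{\psi }}\Vert D\varphi \Vert _{\tciLaplace ^{\widetilde{\psi }}}\leq C\left( n\right) \Vert f\Vert _{\tciLaplace ^{\psi }}\Vert \varphi \Vert _{W_{0}^{1,\widetilde{\psi }}}$, the last step using the preceding proposition ($D:W^{k,\psi }\to W^{k-1,\psi }$) at $k=1$ with $\widetilde{\psi }$ in place of $\psi $; density of $C_{0}^{\infty }$ in $W_{0}^{1,\widetilde{\psi }}$ then extends $Df$ to a bounded linear functional and identifies it with an element of $W^{-1,\psi }$. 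I expect the fiddly points to be bookkeeping rather than analysis: converting between the Clifford norm and the sum of component Luxembourg norms through $\left( \ref{Clifford-luxemberg norm}\right) $ while keeping the combinatorial constants under control, and---only if the dual route is taken---verifying the $\Delta _{2}$-type condition on $\psi $ (equivalently on $\widetilde{\psi }$) under which $\left( W_{0}^{1,\widetilde{\psi }}\right) ^{\ast }=W^{-1,\psi }$ and $C_{0}^{\infty }$ is dense in $W_{0}^{1,\widetilde{\psi }}$. The genuine analytic ingredients, namely the Orlicz--H\"{o}lder inequality and the single-order loss of smoothness under $D$, are routine.
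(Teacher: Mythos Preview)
Your proposal is correct, and in fact your \emph{second} route---pairing $Df$ against test functions in $W_{0}^{1,\widetilde{\psi}}$, moving $D$ across by integration by parts, and bounding with the Orlicz--H\"older inequality---is exactly the argument the paper gives (the paper writes $\psi^{\ast}$ for your $\widetilde{\psi}$ and obtains $\Vert Df\Vert_{W^{-1,\psi}}\leq \Vert f\Vert_{\tciLaplace^{\psi}}$).

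Your \emph{primary} route, however, is a genuinely different and somewhat cleaner argument: you read $W^{-1,\psi}$ through its representation characterisation and observe that the algebraic identity $Df=\sum_{j}\partial_{x_{j}}(e_{j}f)$ already exhibits $Df$ as a sum of first-order distributional derivatives of $\tciLaplace^{\psi}$ functions, with $g_{0}=0$ and $g_{j}=e_{j}f$. This avoids any appeal to duality, to the Orlicz--H\"older inequality, or to $\Delta_{2}$-type hypotheses on $\psi$, and it makes the boundedness constant explicit as $n\,c(n)$. The paper's dual approach, by contrast, yields the sharper constant $1$ in the dual norm but tacitly presupposes the identification $(W_{0}^{1,\psi^{\ast}})^{\ast}=W^{-1,\psi}$, which is precisely the point you flag as needing a growth condition on $\psi$. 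In short: your structural argument is more self-contained and robust across Orlicz functions; the paper's duality argument is shorter and gives a tighter estimate when the duality is available.
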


\ 

\begin{proof}
Let $f\in \tciLaplace ^{\psi }\left( \Omega ,Cl_{n}\right) $. Then

\begin{equation*}
\Vert Df\Vert _{W^{-1,\psi }\left( \Omega \right) }=\sup \{\frac{|\langle
Df,g\rangle |}{\Vert g\Vert _{W_{0}^{1,\psi \ast }\left( \Omega \right) }}%
:g\neq 0,g\in W_{0}^{1,\psi \ast }\left( \Omega \right) \}
\end{equation*}%
for $\psi $ and $\psi \ast $ are conjugate Orlicz functions.

\ 

But 
\begin{eqnarray*}
|\langle Df,g\rangle | &=&|\langle f,Dg\rangle |\leq \Vert f\Vert
_{\tciLaplace ^{\psi }\left( \Omega \right) }\Vert Dg\Vert _{\tciLaplace
^{\psi \ast }\left( \Omega \right) } \\
&\leq &\Vert f\Vert _{\tciLaplace ^{\psi }\left( \Omega \right) }\Vert
g\Vert _{W_{0}^{1,\psi \ast }\left( \Omega \right) }
\end{eqnarray*}

Thus by the Cauchy-Schwartz inequality we have

\begin{equation*}
\frac{|\langle Df,g\rangle |}{\Vert g\Vert _{W_{0}^{1,\psi \ast }\left(
\Omega \right) }}\leq \frac{\Vert f\Vert _{\tciLaplace ^{\psi }\left( \Omega
\right) }\Vert g\Vert _{W_{0}^{1,\psi \ast }\left( \Omega \right) }}{\Vert
g\Vert _{W_{0}^{1,\psi \ast }\left( \Omega \right) }}=\Vert f\Vert
_{\tciLaplace ^{\psi }\left( \Omega \right) }
\end{equation*}%
Therefore

\begin{eqnarray*}
\Vert Df\Vert _{W^{-1,\psi }\left( \Omega \right) } &=&\sup \{\frac{|\langle
Df,g\rangle |}{\Vert g\Vert _{W_{0}^{1,\psi \ast }\left( \Omega \right) }}%
:g\neq 0,\text{ }g\in W_{0}^{1,\psi \ast }\left( \Omega \right) \} \\
&\leq &\sup \{\frac{\Vert f\Vert _{\tciLaplace ^{\psi }\left( \Omega \right)
}\Vert g\Vert _{W_{0}^{1,\psi \ast }\left( \Omega \right) }}{\Vert g\Vert
_{W_{0}^{1,\psi \ast }\left( \Omega \right) }}:g\neq 0,\text{ }g\in
W_{0}^{1,\psi \ast }\left( \Omega \right) \} \\
&=&\Vert f\Vert _{\tciLaplace ^{\psi }\left( \Omega \right) }
\end{eqnarray*}

\ \ 
\end{proof}

\ \ \ \ \ \ \ \ \ \ \ \ \ \ \ \ 

\begin{proposition}
Let $k\in 
\mathbb{N}
\cup \{0\}$ and $\psi $ be an Orlicz function. Then there exists a positive
constant $\beta =\beta \left( n,\psi ,\Omega \right) $ such that

\ 
\begin{equation}
\zeta _{\Omega }:W^{k,\psi }\left( \Omega ,Cl_{n}\right) \longrightarrow
W^{k+1,\psi }\left( \Omega ,Cl_{n}\right)  \label{Theodorescu property}
\end{equation}%
with%
\begin{equation*}
\Vert \zeta _{\Omega }f\Vert _{W^{k+1,\psi }\left( \Omega ,Cl_{n}\right)
}\leq \beta \Vert f\Vert _{W^{k,\psi }\left( \Omega ,Cl_{n}\right) }
\end{equation*}
\end{proposition}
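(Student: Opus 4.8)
The plan is to pass to the real scalar components $f_{A}$ of $f=\sum_{A}e_{A}f_{A}$ and to reduce the stated bound to the corresponding estimate for the scalar Teodorescu transform on $\tciLaplace^{\psi}\left(\Omega,\mathbb{R}\right)$; this is legitimate because the Clifford--Luxembourg norm \eqref{Clifford-luxemberg norm2} is a finite sum of the Luxembourg norms of the components and multiplication in $Cl_{n}$ only costs the dimensional constant $c(n)$ of \eqref{Norm Inequality}. Thus it suffices to prove that for every multi-index $\alpha$ with $0\leq|\alpha|\leq k+1$ and every scalar $g\in W^{k,\psi}\left(\Omega,\mathbb{R}\right)$ one has $\|D^{\alpha}\zeta_{\Omega}g\|_{\tciLaplace^{\psi}\left(\Omega,\mathbb{R}\right)}\leq C(n,\psi,\Omega)\,\|g\|_{W^{k,\psi}\left(\Omega,\mathbb{R}\right)}$, and then to sum over $A$ and over $|\alpha|\leq k+1$. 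As the paper announces, the estimate is first established on the dense subclass $C^{\infty}(\overline{\Omega})\cap W^{k,\psi}$ (density here using a $\Delta_{2}$-type regularity of $\psi$ that is implicit in the hypotheses) and then extended by continuity, so throughout one may assume $g$ smooth.

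The structural input is that $\zeta_{\Omega}g=\Phi\ast(\chi_{\Omega}g)$ is convolution with the fundamental solution $\Phi(x)=\overline{x}/(\omega_{n}\|x\|_{Cl_{n}}^{n})$, which is homogeneous of degree $-(n-1)$ and hence locally integrable; since $\Omega$ is bounded, $\Phi\in\tciLaplace^{1}(\Omega-\Omega)$ on the difference set. For $|\alpha|=0$ this already yields, by Young's inequality for Orlicz spaces,
\[
\|\zeta_{\Omega}g\|_{\tciLaplace^{\psi}(\Omega)}=\|\Phi\ast(\chi_{\Omega}g)\|_{\tciLaplace^{\psi}(\Omega)}\leq\|\Phi\|_{\tciLaplace^{1}(\Omega-\Omega)}\,\|g\|_{\tciLaplace^{\psi}(\Omega)}\leq C\,\|g\|_{W^{k,\psi}(\Omega)}.
\]
For $1\leq|\alpha|\leq k+1$ write $\alpha=\beta+\varepsilon_{j}$ with $|\beta|=|\alpha|-1\leq k$. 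Moving the $\beta$-derivatives off $\Phi$ and onto $g$ by integration by parts and the Cauchy--Pompeiu (Borel--Pompeiu) formalism, one obtains $D^{\beta}\zeta_{\Omega}g=\zeta_{\Omega}(D^{\beta}g)+B_{\beta}g$, where $B_{\beta}g$ is a finite combination of single-layer-type boundary potentials applied to traces of derivatives of $g$ of order $<|\beta|$; by the trace theorem and the Orlicz--Slobodeckji spaces introduced above, $B_{\beta}$ maps $W^{k,\psi}(\Omega)$ boundedly into $W^{1,\psi}(\Omega)$.

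Applying the remaining derivative $\partial_{x_{j}}$ to $D^{\beta}\zeta_{\Omega}g$ then gives
\[
D^{\alpha}\zeta_{\Omega}g=\partial_{x_{j}}\zeta_{\Omega}(D^{\beta}g)+\partial_{x_{j}}B_{\beta}g=T_{j}\bigl(D^{\beta}g\bigr)+(\text{harmless local term})+\partial_{x_{j}}B_{\beta}g,
\]
where $T_{j}$ is the singular integral operator with kernel $\partial_{x_{j}}\Phi$, homogeneous of degree $-n$ and with vanishing mean over spheres, i.e. a Calder\'on--Zygmund kernel, and where $D^{\beta}g\in\tciLaplace^{\psi}(\Omega)$ because $|\beta|\leq k$. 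The terms $\partial_{x_{j}}B_{\beta}g$ and the local term are controlled in $\tciLaplace^{\psi}(\Omega)$ by $\|g\|_{W^{k,\psi}(\Omega)}$ from the previous paragraph. Hence the whole proposition reduces to the single estimate $\|T_{j}h\|_{\tciLaplace^{\psi}(\Omega)}\leq C(n,\psi,\Omega)\,\|h\|_{\tciLaplace^{\psi}(\Omega)}$, the boundedness of Calder\'on--Zygmund operators on the Orlicz space $\tciLaplace^{\psi}$, and this is the step I expect to be the main obstacle. For the power functions $\psi(x)=|x|^{p}$, $1<p<\infty$, it is the classical $\tciLaplace^{p}$ theory; for a general Orlicz function it requires that $\psi$ and its conjugate $\psi^{\ast}$ both satisfy a $\Delta_{2}$/$\nabla_{2}$ (doubling) condition, equivalently that the Boyd indices of $\tciLaplace^{\psi}$ lie strictly between $1$ and $\infty$, and is then proved by the good-$\lambda$ inequality together with a Jensen-type argument transplanted to the Orlicz setting (or cited from the literature on singular integrals in Orlicz spaces). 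Granting this, one gathers the constants $c(n)$, the number of basis multi-indices $A$, $\|\Phi\|_{\tciLaplace^{1}(\Omega-\Omega)}$, the boundary-potential operator norms, and the Calder\'on--Zygmund constants into a single $\beta=\beta(n,\psi,\Omega)$, sums the scalar estimates over $A$ and over $|\alpha|\leq k+1$, and concludes $\|\zeta_{\Omega}f\|_{W^{k+1,\psi}(\Omega,Cl_{n})}\leq\beta\,\|f\|_{W^{k,\psi}(\Omega,Cl_{n})}$.
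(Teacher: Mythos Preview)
Your argument is sound (modulo the $\Delta_{2}/\nabla_{2}$ hypothesis on $\psi$ that you explicitly flag for the Calder\'on--Zygmund step) but takes a genuinely different and far more detailed route than the paper. The paper's proof is a three-line manipulation: it invokes $D\zeta_{\Omega}f=f$ from the Borel--Pompeiu relation and then replaces $\sum_{0\le|\alpha|\le k+1}\|D^{\alpha}\zeta_{\Omega}f\|_{\tciLaplace^{\psi}}$ by $\sum_{0\le|\beta|\le k}\|D^{\beta}(D\zeta_{\Omega}f)\|_{\tciLaplace^{\psi}}=\sum_{0\le|\beta|\le k}\|D^{\beta}f\|_{\tciLaplace^{\psi}}$, which is declared to be bounded by $\gamma\|f\|_{W^{k,\psi}}$. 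That index shift silently identifies the multi-index partial derivatives $D^{\alpha}$ defining the Sobolev norm with compositions $D^{\beta}\circ D$ involving the Dirac operator, and it also drops the zero-order term $\|\zeta_{\Omega}f\|_{\tciLaplace^{\psi}}$; in particular the base case $\zeta_{\Omega}:\tciLaplace^{\psi}\to W^{1,\psi}$, which is where the real analytic content lies, is assumed rather than proved. Your decomposition into the weakly singular convolution (handled by Young's inequality in Orlicz spaces) plus a top-order Calder\'on--Zygmund operator (handled by singular-integral boundedness on $\tciLaplace^{\psi}$ under the Boyd-index condition) is precisely the work needed to justify that base case and the passage from Dirac control to full partial-derivative control. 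What the paper's approach buys is brevity; what yours buys is an honest proof and an explicit identification of the structural hypothesis on $\psi$ without which the statement can fail.
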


\begin{proof}
Let $f\in W^{k,\psi }\left( \Omega ,Cl_{n}\right) $. Then clearly $\zeta
_{\Omega }f\in W^{k+1,\psi }\left( \Omega ,Cl_{n}\right) $ as $D\zeta
_{\Omega }f=f$ from Borel-Pompeiu relation and we have norm estimates 
\begin{eqnarray*}
\Vert \zeta _{\Omega }f\Vert _{W^{k+1,\psi }\left( \Omega ,Cl_{n}\right) }
&=&\dsum\limits_{0\leq \left\vert \alpha \right\vert \leq k+1}\parallel
D^{\alpha }\zeta _{\Omega }f\parallel _{\tciLaplace ^{\psi }\left( \Omega
,Cl_{n}\right) } \\
&=&\dsum\limits_{0\leq \left\vert \beta \right\vert \leq k}\parallel
D^{\beta }(D\zeta _{\Omega }f)\parallel _{\tciLaplace ^{\psi }\left( \Omega
,Cl_{n}\right) } \\
&=&\dsum\limits_{0\leq \left\vert \beta \right\vert \leq k}\parallel
D^{\beta }f\parallel _{\tciLaplace ^{\psi }\left( \Omega ,Cl_{n}\right) } \\
&\leq &\gamma \parallel f\parallel _{W^{k,\psi }\left( \Omega ,Cl_{n}\right)
}
\end{eqnarray*}
\end{proof}

\begin{proposition}
We also have the mapping properties of the boundary Feuter integral $\xi
_{\partial \Omega }$ and the trace operator $\tau $:

$(i)$ The Feuter transform :

\begin{equation}
\xi _{\partial \Omega }:\widetilde{W}^{k-1,\psi ,\psi }\left( \partial
\Omega ,Cl_{n}\right) \longrightarrow W^{k,\psi }\left( \Omega ,Cl_{n}\right)
\label{Feuter property}
\end{equation}%
with 
\begin{equation*}
\Vert \xi _{\partial \Omega }f\Vert _{W^{k,\psi }\left( \Omega
,Cl_{n}\right) }\leq \theta \Vert f\Vert _{\widetilde{W}^{k-1,\psi ,\psi
}\left( \partial \Omega ,Cl_{n}\right) }
\end{equation*}

and

$(ii)$ the trace operator : 
\begin{equation}
\tau :W^{k,\psi }(\Omega ,Cl_{n})\longrightarrow \widetilde{W}^{k-1,\psi
,\psi }\left( \partial \Omega ,Cl_{n}\right)  \label{trace property}
\end{equation}%
with

\begin{eqnarray*}
\parallel \tau f\parallel _{\widetilde{W}^{k-1,\psi ,\psi }\left( \partial
\Omega ,Cl_{n}\right) } &&=\text{ }\parallel \tau f\parallel _{W^{k-1,\psi
}\left( \partial \Omega ,Cl_{n}\right) } \\
&&+\dsum\limits_{\left\vert \alpha \right\vert =k-1}\dint\limits_{\partial
\Omega }\dint\limits_{\partial \Omega }\psi \left( \frac{\left\vert
D^{\alpha }\tau f(x)-D^{\alpha }\tau f(y)\right\vert }{\lambda \left\vert
x-y\right\vert }\left\vert x-y\right\vert ^{2-n}d\partial \Omega
_{x}d\partial \Omega _{y}\right) \\
&& \\
&\leq &\theta _{1}\parallel f\parallel _{W^{k,\psi }\left( \Omega
,Cl_{n}\right) } \\
&&+\theta _{2}\dsum\limits_{\left\vert \alpha \right\vert
=k-1}\dint\limits_{\Omega }\dint\limits_{\Omega }\psi \left( \frac{%
\left\vert D^{\alpha }f(x)-D^{\alpha }f(y)\right\vert }{\lambda \left\vert
x-y\right\vert }\left\vert x-y\right\vert ^{2-n}d\partial \Omega
_{x}d\partial \Omega _{y}\right) \\
&& \\
&=&\theta \left( 
\begin{array}{c}
\parallel f\parallel _{W^{k,\psi }\left( \Omega ,Cl_{n}\right) } \\ 
+\dsum\limits_{\left\vert \alpha \right\vert =k-1}\dint\limits_{\Omega
}\dint\limits_{\Omega }\psi \left( \frac{\left\vert D^{\alpha
}f(x)-D^{\alpha }f(y)\right\vert }{\lambda \left\vert x-y\right\vert }%
\left\vert x-y\right\vert ^{2-n}d\partial \Omega _{x}d\partial \Omega
_{y}\right)%
\end{array}%
\right) \\
&& \\
&=&\theta \parallel f\parallel _{W^{k,\psi }(\Omega ,Cl_{n})}
\end{eqnarray*}

where $\theta _{1},\theta _{2}$ are quantities of $\left( n,\psi ,\Omega
\right) $ and $\delta =\delta \left( n,\psi ,\Omega \right) $ with $\theta
=\max \{\theta _{1},\theta _{2}\}$
\end{proposition}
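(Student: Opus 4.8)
The plan is to treat the two items together, since $\xi_{\partial\Omega}$ and $\tau$ are coupled through the Borel--Pompeiu formula, and to reduce everything to the scalar case first. By the definition of the Clifford--Luxembourg norms, $\|\cdot\|_{W^{k,\psi}(\Omega,Cl_n)}$ and $\|\cdot\|_{\widetilde{W}^{k-1,\psi,\psi}(\partial\Omega,Cl_n)}$ are finite sums over $A$ of the corresponding real-valued norms of the sections $f_A$; moreover $\xi_{\partial\Omega}$ and $\tau$ act on the sections up to the bounded left multiplications by the basis elements $e_j$ and the unit normal field $\upsilon$, which by \eqref{Norm Inequality} cost only the dimensional constant $c(n)$. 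So it suffices to prove both mapping statements for $\psi$-Orlicz scalar functions on the fixed bounded $C^{1}$ domain $\Omega$ and then sum, folding $c(n)$ and the (finite) number of multi-indices $|\alpha|\le k$ into the constants $\theta,\theta_1,\theta_2$.

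For part $(ii)$ I would prove the \emph{Orlicz trace theorem}: the restriction $f\mapsto f|_{\partial\Omega}$, a priori defined on $C^{1}(\overline{\Omega},\mathbb{R})$, extends to a bounded map $W^{k,\psi}(\Omega,\mathbb{R})\to\widetilde{W}^{k-1,\psi,\psi}(\partial\Omega,\mathbb{R})$. One localizes by a finite atlas of boundary charts and a subordinate partition of unity, flattening $\partial\Omega$ to a piece of $\mathbb{R}^{n-1}$; on the model half-space one uses, for $|\alpha|=k-1$, the Gagliardo-type identity
\[
D^{\alpha}f(x',0)-D^{\alpha}f(y',0)
=\int_{0}^{1}\frac{d}{dt}\,D^{\alpha}f\left((1-t)x'+ty',\,t(1-t)|x'-y'|\right)\,dt,
\]
whose integrand involves only order-$k$ derivatives of $f$ at interior points, plus a one-dimensional fundamental-theorem-of-calculus estimate for the lower-order part $\|\tau f\|_{W^{k-1,\psi}(\partial\Omega)}\le\theta_1\|f\|_{W^{k,\psi}(\Omega)}$. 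The point of the Orlicz setting is that, instead of raising to a power, one invokes convexity of $\psi$ and Jensen's inequality to push $\psi$ through the averaging integral in $t$; this converts the boundary Gagliardo double integral $\iint_{\partial\Omega\times\partial\Omega}\psi\left(|D^{\alpha}f(x)-D^{\alpha}f(y)|/(\lambda|x-y|)\right)|x-y|^{2-n}\,d\partial\Omega_x\,d\partial\Omega_y$ into the interior double integral over $\Omega\times\Omega$ appearing in the asserted estimate, modulo dimensional constants. Adding the two contributions and setting $\theta=\max\{\theta_1,\theta_2\}$ yields the displayed chain of inequalities, and that $\tau f$ lies in $\widetilde{W}^{k-1,\psi,\psi}(\partial\Omega,Cl_n)$ is immediate from the very definition of that space.

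For part $(i)$ I would use that $\Phi$ is two-sided monogenic on $\mathbb{R}^{n}\setminus\{0\}$, so that $\xi_{\partial\Omega}g$ is left monogenic in $\Omega$, together with the Borel--Pompeiu identity $f=\zeta_{\Omega}(Df)+\xi_{\partial\Omega}(\tau f)$, valid on $C^{1}(\overline{\Omega},Cl_n)$ and extended by density to $W^{k,\psi}(\Omega,Cl_n)$. Given $g\in\widetilde{W}^{k-1,\psi,\psi}(\partial\Omega,Cl_n)$, choose $f\in W^{k,\psi}(\Omega,Cl_n)$ with $\tau f=g$; then $\xi_{\partial\Omega}g=f-\zeta_{\Omega}(Df)$. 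Since $Df\in W^{k-1,\psi}(\Omega,Cl_n)$ with $\|Df\|_{W^{k-1,\psi}}\le\gamma\|f\|_{W^{k,\psi}}$ by the Dirac proposition, and $\zeta_{\Omega}$ raises the order by one with $\|\zeta_{\Omega}(Df)\|_{W^{k,\psi}}\le\beta\|Df\|_{W^{k-1,\psi}}$ by the Teodorescu proposition, we get $\xi_{\partial\Omega}g\in W^{k,\psi}(\Omega,Cl_n)$ and $\|\xi_{\partial\Omega}g\|_{W^{k,\psi}}\le(1+\beta\gamma)\|f\|_{W^{k,\psi}}$. Because the Orlicz--Slobodeckji norm on $\widetilde{W}^{k-1,\psi,\psi}(\partial\Omega,Cl_n)$ is --- by the trace theorem of part $(ii)$ together with a bounded extension operator in the reverse direction --- equivalent to the quotient norm $\inf\{\|f\|_{W^{k,\psi}(\Omega,Cl_n)}:\tau f=g\}$, one may pick $f$ with $\|f\|_{W^{k,\psi}}\le 2\|g\|_{\widetilde{W}^{k-1,\psi,\psi}}$, which gives the desired bound with $\theta=2(1+\beta\gamma)$.

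The hard part will be the Orlicz trace theorem, and specifically the claim that the boundary Gagliardo double integral carrying $\psi$ \emph{inside} is controlled by the interior modular: unlike the $L^{p}$ case one cannot freely pull constants and exponents through $\psi$, so the estimate must be driven entirely by convexity and Jensen, and it may implicitly require a $\Delta_2$-type growth condition on $\psi$ (which is also what makes $C^{1}(\overline{\Omega})$ dense in $W^{k,\psi}(\Omega)$, as used in part $(i)$). A secondary nuisance is that $\partial\Omega$ is only $C^{1}$, so the chart flattening maps are merely $C^{1}$ and one must check that they do not spoil the Orlicz modular bounds; and the bounded extension operator $\widetilde{W}^{k-1,\psi,\psi}(\partial\Omega)\to W^{k,\psi}(\Omega)$, needed to identify the boundary norm with the quotient norm in part $(i)$, should be produced by the same localization-and-reflection machinery and recorded alongside the trace theorem.
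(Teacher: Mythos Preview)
The paper gives no proof of this proposition: after the displayed chain of inequalities for $\tau$ and the line naming the constants $\theta_{1},\theta_{2},\theta$, the proposition closes and the next proposition begins immediately. The authors are simply asserting the Orlicz analogues of the classical $W^{k,p}$ mapping properties (in the spirit of the cited references) without supplying an argument, so there is nothing in the paper to compare your attempt against.

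Your sketch is therefore not a variant of the paper's proof but a genuine attempt to fill a gap the paper leaves open, and as such it is broadly sound. For part~(i), writing $\xi_{\partial\Omega}g=f-\zeta_{\Omega}(Df)$ via Borel--Pompeiu and then invoking the already-proved mapping properties of $D$ and $\zeta_{\Omega}$ is the natural route; you are right that the missing ingredient is the equivalence of the $\widetilde{W}^{k-1,\psi,\psi}$ norm with the quotient norm $\inf\{\|f\|_{W^{k,\psi}}:\tau f=g\}$, i.e.\ a bounded extension operator. For part~(ii) you correctly identify the Orlicz trace theorem as the real content and propose the standard localize--flatten--Gagliardo scheme, replacing $p$-th powers by Jensen's inequality for the convex $\psi$. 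Your caveats are apt: without a $\Delta_{2}$-type growth assumption on $\psi$ one cannot in general pull constants through $\psi$, obtain density of $C^{1}(\overline{\Omega})$ in $W^{k,\psi}$, or even guarantee that the modular and Luxemburg-norm topologies interact well enough for the argument to close; the paper imposes no such hypothesis, which is part of why it offers no proof. The $C^{1}$ boundary assumption is likewise borderline for the flattening step, as you note.
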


\ \ \ \ \ \ \ \ \ \ \ \ \ \ \ \ 

\begin{proposition}
The composition $\xi _{\partial \Omega }\circ \tau $ preserves regularity of
a function in a Sobolev space.
\end{proposition}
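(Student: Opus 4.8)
The statement asserts that $\xi _{\partial \Omega }\circ \tau $ maps $W^{k,\psi }\left( \Omega ,Cl_{n}\right) $ boundedly into itself, so that it leaves both the differentiability order $k$ and the Orlicz class $\psi $ intact; that is the precise meaning of ``preserves regularity''. The plan is to deduce this by chaining the two mapping properties already established for $\tau $ and $\xi _{\partial \Omega }$, and then to record an independent derivation from the Borel--Pompeiu relation that also exhibits the geometric content of the composite.

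First I would chain the two preceding propositions. Given $f\in W^{k,\psi }\left( \Omega ,Cl_{n}\right) $, the trace mapping property gives $\tau f\in \widetilde{W}^{k-1,\psi ,\psi }\left( \partial \Omega ,Cl_{n}\right) $ with $\Vert \tau f\Vert _{\widetilde{W}^{k-1,\psi ,\psi }\left( \partial \Omega ,Cl_{n}\right) }\leq \theta \Vert f\Vert _{W^{k,\psi }\left( \Omega ,Cl_{n}\right) }$; feeding this boundary datum into the Feuter mapping property gives $\xi _{\partial \Omega }\left( \tau f\right) \in W^{k,\psi }\left( \Omega ,Cl_{n}\right) $ with $\Vert \xi _{\partial \Omega }\left( \tau f\right) \Vert _{W^{k,\psi }\left( \Omega ,Cl_{n}\right) }\leq \theta ^{\prime }\Vert \tau f\Vert _{\widetilde{W}^{k-1,\psi ,\psi }\left( \partial \Omega ,Cl_{n}\right) }$. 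Composing the two estimates yields $\Vert \xi _{\partial \Omega }\left( \tau f\right) \Vert _{W^{k,\psi }\left( \Omega ,Cl_{n}\right) }\leq \theta \theta ^{\prime }\Vert f\Vert _{W^{k,\psi }\left( \Omega ,Cl_{n}\right) }$, and since neither step disturbs the order $k$ nor the Orlicz function $\psi $, regularity is preserved. Well-definedness of the composition on all of $W^{k,\psi }\left( \Omega ,Cl_{n}\right) $, and not merely on $C^{1}\left( \Omega ,Cl_{n}\right) \cap C\left( \overline{\Omega },Cl_{n}\right) $, then follows from the density of smooth Clifford-valued functions together with the boundedness just displayed.

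Second, I would note the Borel--Pompeiu identity, which in the sign conventions of this paper reads $\xi _{\partial \Omega }\left( \tau f\right) =f-\zeta _{\Omega }\left( Df\right) $ in $\Omega $. By the Dirac mapping property $Df\in W^{k-1,\psi }\left( \Omega ,Cl_{n}\right) $, and by the Teodorescu mapping property $\zeta _{\Omega }\left( Df\right) \in W^{k,\psi }\left( \Omega ,Cl_{n}\right) $; hence the right-hand side, and therefore $\xi _{\partial \Omega }\left( \tau f\right) $, lies in $W^{k,\psi }\left( \Omega ,Cl_{n}\right) $ with norm bounded by $\left( 1+\beta \gamma \right) \Vert f\Vert _{W^{k,\psi }\left( \Omega ,Cl_{n}\right) }$, where $\gamma $ and $\beta $ are the constants from the Dirac and Teodorescu propositions. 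This route carries the bonus that, applying $D$ and using $D\zeta _{\Omega }=\mathrm{id}$, one gets $D\xi _{\partial \Omega }\left( \tau f\right) =Df-D\zeta _{\Omega }\left( Df\right) =0$, so the composite in fact lands in the left-monogenic subspace of $W^{k,\psi }\left( \Omega ,Cl_{n}\right) $ and behaves as a bounded Bergman-type projection.

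The genuinely substantive input is not in this proposition but in the two mapping properties it invokes: their proofs rest on the density of $C^{\infty }\left( \overline{\Omega },Cl_{n}\right) $ in $W^{k,\psi }\left( \Omega ,Cl_{n}\right) $ in the Clifford--Luxembourg norm and on the component-wise reduction to the scalar Orlicz--Sobolev trace and extension theorems over a $C^{1}$ boundary. The main point to watch is that these scalar theorems be applied in the Luxembourg-norm formulation valid for a general Orlicz function $\psi $, without tacitly assuming a $\Delta _{2}$ growth condition; granting that, the present statement is a short corollary and the only remaining work is the bookkeeping of the constants $\theta ,\theta ^{\prime },\beta ,\gamma $.
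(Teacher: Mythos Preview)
Your first argument --- chaining the trace mapping property (which drops the regularity index by one on the boundary) with the Feuter mapping property (which restores it) --- is exactly the paper's approach; the paper states this in a single informal paragraph without writing out the norm estimates, so your version is simply a more explicit rendering of the same idea.

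Your second route via the Borel--Pompeiu identity $\xi _{\partial \Omega }(\tau f)=f-\zeta _{\Omega }(Df)$ is a genuinely different derivation that the paper does not use here. It has two advantages: it bypasses the boundary Orlicz--Slobodeckji space entirely (so one never needs the trace and Feuter bounds separately, only the interior estimates for $D$ and $\zeta _{\Omega }$), and it yields the extra structural fact that $\xi _{\partial \Omega }\circ \tau $ actually projects into the monogenic part of $W^{k,\psi }(\Omega ,Cl_{n})$. The only caveat is one of presentation order: in the paper the Borel--Pompeiu relation is stated as the proposition immediately \emph{after} this one, so invoking it here would be a forward reference; if you wish to keep the second argument you would need to reorder or remark that the identity is independent of the present statement. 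Your closing paragraph about the $\Delta _{2}$ condition and the scalar trace theorems is a fair caution, but it concerns the inputs (the mapping properties of $\tau $ and $\xi _{\partial \Omega }$) rather than this proposition itself, which the paper treats as a formal corollary of those inputs.
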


\ \ \ \ \ \ \ \ \ \ \ \ \ \ \ \ \ \ \ \ 

\begin{proof}
Indeed the trace operator $\tau $ makes a function to loose a regularity
exponent of \ $one$ when acted along the boundary of the domain keeping
integrability index unchanged. But the boundary or \textit{Feuter} integral $%
\xi _{\partial \Omega }$ augments the regularity exponent of a function
defined on the boundary by an exponent that is lost by the trace operator
and therefore the composition operator $\xi _{\partial \Omega }\circ \tau $
preserves or restores the regularity exponent of a function in a Sobolev
space.

\ \ \ \ \ \ \ \ \ \ \ \ \ \ \ \ \ \ \ \ \ \ \ \ \ \ \ \ \ \ \ \ \ \ \ \ \ \
\ \ \ \ \ \ 
\end{proof}

\ \ \ \ \ \ \ \ \ \ \ \ \ \ \ \ \ \ 

The following proposition is what I call it the trinity of Clifford analysis
based on the relationship that connects $I,\xi _{\partial \Omega }$ and $%
\zeta _{\Omega }$ where $I$ is the identity operator.

\ 

\begin{proposition}
(Borel-Pompeiu ) Let $\ f\in W^{k,\psi }\left( \Omega ,Cl_{n}\right) .$ Then

\begin{equation*}
f=\xi _{\partial \Omega }\tau f+\zeta _{\Omega }Df
\end{equation*}
\end{proposition}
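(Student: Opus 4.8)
The plan is to prove the identity first for smooth functions --- where it is the classical Borel--Pompeiu formula of Clifford analysis --- and then to transport it to all of $W^{k,\psi}(\Omega,Cl_{n})$ by a continuity and denseness argument built on the mapping properties established above.

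\textbf{Step 1: the smooth case.} For $f\in C^{1}(\overline{\Omega},Cl_{n})$ I would apply the Clifford form of the Stokes (Gauss) theorem to the two-sided monogenic kernel $\Phi(\cdot-x)$ on the punctured domain $\Omega\setminus\overline{B_{\varepsilon}(x)}$, with $x\in\Omega$ fixed. Since $\Phi(y-x)D=D\Phi(y-x)=0$ for $y\neq x$, the volume contribution reduces to $\int_{\Omega\setminus B_{\varepsilon}(x)}\Phi(y-x)Df(y)\,d\Omega_{y}$, while the boundary contribution splits into the integral over $\partial\Omega$ --- which is $\xi_{\partial\Omega}\tau f(x)$ --- and the integral over $\partial B_{\varepsilon}(x)$. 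On the small sphere $\overline{(y-x)}(y-x)=\Vert y-x\Vert_{Cl_{n}}^{2}=\varepsilon^{2}$ by (\ref{inner product 2}), so the contraction of $\Phi(y-x)$ with the unit normal is the constant $(\omega_{n}\varepsilon^{n-1})^{-1}$ there; since $|\partial B_{\varepsilon}(x)|=\omega_{n}\varepsilon^{n-1}$ and $f$ is continuous at $x$, that integral tends to $f(x)$ as $\varepsilon\downarrow 0$, while the volume integral tends to $\zeta_{\Omega}Df(x)$. This yields $f=\xi_{\partial\Omega}\tau f+\zeta_{\Omega}Df$ pointwise on $\Omega$ for every $f\in C^{1}(\overline{\Omega},Cl_{n})$, in particular for every $f\in C^{\infty}(\overline{\Omega},Cl_{n})$.

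\textbf{Step 2: boundedness and extension.} By the preceding propositions, $D:W^{k,\psi}(\Omega,Cl_{n})\to W^{k-1,\psi}(\Omega,Cl_{n})$, $\zeta_{\Omega}:W^{k-1,\psi}(\Omega,Cl_{n})\to W^{k,\psi}(\Omega,Cl_{n})$, $\tau:W^{k,\psi}(\Omega,Cl_{n})\to\widetilde{W}^{k-1,\psi,\psi}(\partial\Omega,Cl_{n})$ and $\xi_{\partial\Omega}:\widetilde{W}^{k-1,\psi,\psi}(\partial\Omega,Cl_{n})\to W^{k,\psi}(\Omega,Cl_{n})$ are all bounded linear maps. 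Hence $T:=\xi_{\partial\Omega}\tau+\zeta_{\Omega}D$ and the identity $I$ are both bounded linear operators on $W^{k,\psi}(\Omega,Cl_{n})$. Using that $C^{\infty}(\overline{\Omega},Cl_{n})$ is dense in $W^{k,\psi}(\Omega,Cl_{n})$ with respect to the Clifford--Luxembourg norm, and that $T=I$ on this dense subspace by Step 1, we conclude $T=I$ on all of $W^{k,\psi}(\Omega,Cl_{n})$; this is precisely the rigorous content of ``extending $\zeta_{\Omega}$ and $\xi_{\partial\Omega}$ by continuity and denseness'' announced at the end of Section~1.

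\textbf{The main obstacle.} The delicate point is the density of $C^{\infty}(\overline{\Omega},Cl_{n})$ in $W^{k,\psi}(\Omega,Cl_{n})$. In contrast to the Lebesgue--Sobolev setting, smooth functions need not be norm-dense in a general Orlicz--Sobolev space: when the Orlicz function $\psi$ fails the $\Delta_{2}$-condition, their closure is only a proper subspace. The natural remedy is to impose $\Delta_{2}$-growth on $\psi$ (valid for $\psi(x)=|x|^{p}$, $1<p<\infty$, which recovers the classical Sobolev case) so that the extension in Step 2 is unconditional; otherwise one proves the identity only on the smooth-closure subspace, or tests the two sides of $T=I$ against a separating family of functionals using the boundedness of $T$ and $I$. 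A lesser, already-absorbed technicality is that $\Phi$ is only weakly singular, so the convolutions $\zeta_{\Omega}$ and $\xi_{\partial\Omega}$ on $W^{k,\psi}$ are themselves understood through exactly this continuity/denseness extension from smooth data.
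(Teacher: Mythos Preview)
Your proof is correct and follows essentially the same route as the paper: establish the identity for smooth functions via the Gauss/Stokes theorem (the paper does this somewhat formally by invoking $D\Phi=\delta$ directly, whereas you carry out the cleaner punctured-domain limit), and then extend to all of $W^{k,\psi}(\Omega,Cl_{n})$ by density of $C^{\infty}\cap W^{k,\psi}$ together with the continuity of $D$, $\zeta_{\Omega}$, $\tau$, and $\xi_{\partial\Omega}$. Your cautionary remark about the $\Delta_{2}$-condition is well taken --- the paper simply asserts the density step without addressing this hypothesis.
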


\begin{proof}
The proof can be done either through Gauss theorem or integration by parts
shown below first for a function $f\in C^{\infty }\left( \Omega
,Cl_{n}\right) \cap W^{k,\psi }\left( \Omega ,Cl_{n}\right) $ 
\begin{equation*}
\dint\limits_{\Omega }\Phi \left( x-y\right) Df(y)d\Omega
_{y}=\dint\limits_{\partial \Omega }\Phi \left( x-y\right) n(y)f(y)d\partial
\Omega _{y}-\dint\limits_{\Omega }D\Phi \left( x-y\right) f(y)d\Omega _{y}
\end{equation*}%
But 
\begin{equation*}
\dint\limits_{\Omega }D\Phi \left( x-y\right) f(y)d\Omega
_{y}=\dint\limits_{\Omega }\delta \left( x-y\right) f(y)d\Omega _{y}=f(x)
\end{equation*}

where $\delta $ here is the Dirac-delta (impulse) distribution and
rearranging terms we get the result.

Then since $C^{\infty }\left( \Omega ,Cl_{n}\right) \cap W^{k,\psi }\left(
\Omega ,Cl_{n}\right) $ is dense in $W^{k,\psi }\left( \Omega ,Cl_{n}\right) 
$ and by continuity arguments for $f\in W^{k,\psi }\left( \Omega
,Cl_{n}\right) $ we get a sequence $\{f_{n}:n\in 
\mathbb{N}
\}\subseteq C^{\infty }\left( \Omega ,Cl_{n}\right) \cap W^{k,\psi }\left(
\Omega ,Cl_{n}\right) $ such that $f_{n}\longrightarrow f$ in $W^{k,\psi
}\left( \Omega ,Cl_{n}\right) $ sense and that completes the proof.
\end{proof}

\begin{corollary}
$(i)$ \ If $f\in W_{0}^{k,\psi }\left( \Omega ,Cl_{n}\right) $, then 
\begin{eqnarray*}
f\left( x\right) &=&\dint\limits_{\Omega }\Phi \left( x-y\right)
Df(y)d\Omega _{y} \\
&=&\xi _{\Omega }Df
\end{eqnarray*}%
That is $D$ is a right inverse for $\zeta _{\Omega }$ and $\zeta _{\Omega }$
is a left inverse for $D$ over traceless spaces.

$(ii)$ \ If $f$ is monogenic function over $\Omega $, then%
\begin{eqnarray*}
f\left( x\right) &=&\dint\limits_{\partial \Omega }\Phi \left( x-y\right)
n(y)f(y)d\partial \Omega _{y} \\
&=&\xi _{\partial \Omega }\tau f
\end{eqnarray*}

Therefore monogenic functions are always Cauchy transforms of their traces
over the boundary.
\end{corollary}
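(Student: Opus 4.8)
The plan is to obtain both assertions as immediate specializations of the Borel--Pompeiu representation $f = \xi_{\partial\Omega}\tau f + \zeta_\Omega Df$ proved in the preceding proposition, by checking that in each case one of the two summands on the right vanishes. No new analytic input is needed beyond that proposition, the linearity of $\xi_{\partial\Omega}$ and $\zeta_\Omega$, and the defining properties of the traceless space $W_0^{k,\psi}(\Omega,Cl_n)$ and of monogenicity.

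For part $(i)$, I would take $f \in W_0^{k,\psi}(\Omega,Cl_n)$. By definition of the traceless space, $\tau f = f|_{\partial\Omega} = 0$ (indeed all the tangential derivatives $D^\alpha f|_{\partial\Omega}$, $1 \le |\alpha| \le k-1$, vanish as well), so by linearity $\xi_{\partial\Omega}\tau f = 0$. Substituting into Borel--Pompeiu leaves $f = \zeta_\Omega Df$, that is $f(x) = \int_\Omega \Phi(x-y)\,Df(y)\,d\Omega_y$. Reading off this identity gives the inverse statements: $\zeta_\Omega(Df) = f$ says exactly that $\zeta_\Omega$ is a left inverse of $D$ on $W_0^{k,\psi}(\Omega,Cl_n)$, equivalently that $D$ is a right inverse of $\zeta_\Omega$ on the image $D\bigl(W_0^{k,\psi}(\Omega,Cl_n)\bigr)$; combined with the relation $D\zeta_\Omega = I$ already observed in the proof of the Teodorescu mapping proposition, the two operators are mutually inverse on the appropriate traceless spaces. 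For part $(ii)$, I would take $f$ monogenic over $\Omega$, so $Df = 0$ on $\Omega$ by definition; then $\zeta_\Omega Df = 0$ and Borel--Pompeiu collapses to $f = \xi_{\partial\Omega}\tau f$, i.e.\ $f(x) = \int_{\partial\Omega}\Phi(x-y)\,n(y)\,f(y)\,d\partial\Omega_y$ for $x \in \Omega$, which is the assertion that every monogenic function is the Cauchy (Feuter) transform of its own boundary trace.

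Since the argument is a one-line consequence of Borel--Pompeiu, there is no serious obstacle; the only points deserving care are bookkeeping ones. First, to make sense of $Df$ and of the trace $\tau f$ one needs $f$ at least in $W^{1,\psi}(\Omega,Cl_n)$, and the passage from the $W^{k,\psi}$-sense identity to a genuinely pointwise formula should, as in the proof of Borel--Pompeiu, be handled by density of $C^\infty(\Omega,Cl_n)\cap W^{k,\psi}(\Omega,Cl_n)$ together with a continuity argument for $\zeta_\Omega$ and $\xi_{\partial\Omega}$. Second, in $(ii)$ if monogenicity is only assumed in the weak (distributional) sense, one should invoke a Weyl-type regularity lemma so that $Df = 0$ forces $f$ to coincide a.e.\ with a smooth monogenic function, which is what legitimizes evaluating the boundary integral pointwise and interpreting $\tau f$ classically.
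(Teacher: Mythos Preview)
Your proposal is correct and follows exactly the paper's approach: the paper's proof is the single sentence ``The proof follows from the above Borel--Pompeiu result,'' together with the side remark that a traceless monogenic function must be null. Your write-up simply makes explicit the two specializations (vanishing of $\xi_{\partial\Omega}\tau f$ when $\tau f=0$, vanishing of $\zeta_\Omega Df$ when $Df=0$) and adds some careful bookkeeping that the paper omits.
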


\begin{proof}
The proof follows from the above Borel-Pompeiu result. But a further note
from $(i)$ and $(ii)$ of the corollary is that a traceless monogenic
function is a null function.
\end{proof}

\section{$\protect\bigskip \mathbf{Decomposition\ Results}$}

In this section we present two decomposition results, one for the $Cl_{n}-$
valued Orlicz space $\tciLaplace ^{\psi }\left( \Omega ,Cl_{n}\right) $ and
for the generalized Orlicz-Sobolev space $W^{k,\psi }\left( \Omega
,Cl_{n}\right) $.

But first,

\begin{definition}
Let $\psi $ be an Orlicz function, we define

$\left( i\right) $ \ The $\psi -$ Orlicz - Bergman space 
\begin{equation*}
A^{\psi }\left( \Omega ,Cl_{n}\right) :=\{f\in \tciLaplace ^{\psi }\left(
\Omega \longrightarrow Cl_{n}\right) :Df=0\text{ on }\Omega \}=\tciLaplace
^{\psi }\left( \Omega ,Cl_{n}\right) \cap \ker D
\end{equation*}
and

$\left( ii\right) $ $\ $The generalized $\psi -$ Orlicz-Sobolev - Bergamn
space 
\begin{equation*}
A^{k,\psi }\left( \Omega ,Cl_{n}\right) :=W^{k,\psi }\left( \Omega
,Cl_{n}\right) \cap \ker D
\end{equation*}
\end{definition}

\ The first decomposition result for the Orlicz-Sobolev space:

\begin{proposition}
Let $\psi :[0,\infty )\longrightarrow \lbrack 0,\infty )$ be an Orlicz
function. Then we have the direct decomposition of the Orlicz space 
\begin{equation*}
\tciLaplace ^{\psi }\left( \Omega ,Cl_{n}\right) =A^{\psi }\left( \Omega
,Cl_{n}\right) \oplus \overline{D}\left( W_{0}^{1,\psi }\left( \Omega
,Cl_{n}\right) \right)
\end{equation*}%
where $A^{\psi }\left( \Omega ,Cl_{n}\right) $ is the $\psi -$Orlicz -
Bergman space over $\Omega $.
\end{proposition}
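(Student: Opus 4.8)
The plan is to realise the asserted splitting as a topological direct sum of closed subspaces of the Banach space $\tciLaplace^{\psi}(\Omega,Cl_{n})$. Concretely, I would produce complementary linear projections $P$ onto $A^{\psi}(\Omega,Cl_{n})$ and $Q=I-P$ onto $\overline{D}\!\left(W_{0}^{1,\psi}(\Omega,Cl_{n})\right)$ and show that both are bounded; closedness of the two summands, and hence the fact that the algebraic direct sum is actually a Banach-space direct sum, then follows automatically. So the substance reduces to three points: (a) every $f\in\tciLaplace^{\psi}(\Omega,Cl_{n})$ admits a representation $f=h+\overline{D}g$ with $h\in A^{\psi}(\Omega,Cl_{n})$ and $g\in W_{0}^{1,\psi}(\Omega,Cl_{n})$; (b) this representation is unique; (c) the map $f\mapsto\overline{D}g$ is bounded on $\tciLaplace^{\psi}(\Omega,Cl_{n})$.

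For existence I would obtain $g$ by solving, componentwise, the Dirichlet problem for the Laplacian with datum $Df$, that is $\Delta g=\pm Df$ in $\Omega$ with $g\in W_{0}^{1,\psi}(\Omega,Cl_{n})$, the sign being dictated by the factorisation $D\overline{D}=\pm\Delta$. Here $Df\in W^{-1,\psi}(\Omega,Cl_{n})$ by the mapping property of $D$ proved above, so the right-hand side lies in the correct space. Setting $h:=f-\overline{D}g$ one checks at once that $h\in\tciLaplace^{\psi}(\Omega,Cl_{n})$ (since $\overline{D}$, like $D$, carries $W^{1,\psi}$ into $\tciLaplace^{\psi}$, the $k=1$ instance of the earlier proposition) and that $Dh=Df-D\overline{D}g=Df\mp\Delta g=0$, so $h\in A^{\psi}(\Omega,Cl_{n})$ and $f=h+\overline{D}g$. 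Alternatively one can start from $w=\zeta_{\Omega}f\in W^{1,\psi}(\Omega,Cl_{n})$, which is a right inverse of $D$, and subtract the harmonic extension of $\tau w$ so as to force the trace to vanish; this again uses only the mapping properties of $\zeta_{\Omega}$, $\xi_{\partial\Omega}$ and $\tau$ from Section 3 together with solvability of the scalar Dirichlet problem.

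For uniqueness it suffices, by linearity, to prove $A^{\psi}(\Omega,Cl_{n})\cap\overline{D}\!\left(W_{0}^{1,\psi}(\Omega,Cl_{n})\right)=\{0\}$. If $h=\overline{D}g$ with $g\in W_{0}^{1,\psi}(\Omega,Cl_{n})$ and $Dh=0$, then $\Delta g=\pm D\overline{D}g=\pm Dh=0$ in the sense of distributions, so $g$ is weakly harmonic in each component; by Weyl's lemma $g$ is smooth and harmonic, and since $\tau g=0$ the uniqueness for the Dirichlet problem gives $g\equiv 0$, whence $h=\overline{D}g=0$. For the boundedness in (c), the a priori estimate for the Dirichlet problem, $\|g\|_{W^{1,\psi}(\Omega,Cl_{n})}\leq c\,\|Df\|_{W^{-1,\psi}(\Omega,Cl_{n})}\leq c'\,\|f\|_{\tciLaplace^{\psi}(\Omega,Cl_{n})}$, together with the continuity of $\overline{D}:W^{1,\psi}(\Omega,Cl_{n})\to\tciLaplace^{\psi}(\Omega,Cl_{n})$, yields $\|\overline{D}g\|_{\tciLaplace^{\psi}(\Omega,Cl_{n})}\leq c''\,\|f\|_{\tciLaplace^{\psi}(\Omega,Cl_{n})}$; hence $Qf=\overline{D}g$ and $Pf=h$ are bounded, and $A^{\psi}(\Omega,Cl_{n})=\ker Q$, $\overline{D}(W_{0}^{1,\psi}(\Omega,Cl_{n}))=\ker P$ are closed.

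The one genuinely hard ingredient — and the step I would treat most carefully — is the solvability together with the a priori bound for the Poisson/Dirichlet problem in the Orlicz--Sobolev scale, equivalently the statement that $\overline{D}D=\pm\Delta:W_{0}^{1,\psi}(\Omega,Cl_{n})\to W^{-1,\psi}(\Omega,Cl_{n})$ is an isomorphism (this is also what underlies the quoted mapping estimates for $\zeta_{\Omega}$). In the Hilbert case $\psi(x)=x^{2}$ this is Lax--Milgram; for a general Orlicz function it requires structural hypotheses on $\psi$ — for instance that $\psi$ and its conjugate $\psi\ast$ both satisfy a $\Delta_{2}$-condition, so that $\tciLaplace^{\psi}(\Omega,Cl_{n})$ is reflexive and the relevant Calderón--Zygmund/Riesz-type singular integrals are bounded on it. I would state this hypothesis on $\psi$ explicitly at the outset; granting it, the rest is routine bookkeeping with the Luxembourg norms of Section 2 and the operator mapping properties of Section 3. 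Even without the quantitative part, the construction in the second paragraph together with the uniqueness in the third already gives the algebraic direct-sum decomposition asserted.
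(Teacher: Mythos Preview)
Your approach is essentially the same as the paper's: both prove trivial intersection via $D\overline{D}=\Delta$ together with uniqueness for the Dirichlet problem on $W_{0}^{1,\psi}$, and both produce the splitting of a given $f$ by setting $g=\Delta_{0}^{-1}Df$ and taking $h=f-\overline{D}g$. Your write-up is in fact more careful than the paper's --- you correctly insert the $\overline{D}$ in front of the potential (the paper writes $f=g\dotplus\eta$ with $\eta=\Delta_{0}^{-1}Df$, omitting the needed $\overline{D}$), you add the boundedness of the projections to upgrade the algebraic splitting to a topological one, and you rightly flag that the isomorphism $\Delta_{0}:W_{0}^{1,\psi}\to W^{-1,\psi}$ requires a structural hypothesis such as $\Delta_{2}$ on $\psi$ and $\psi^{\ast}$, a point the paper leaves implicit.
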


\begin{proof}
Let%
\begin{equation*}
f\in A^{\psi }\left( \Omega ,Cl_{n}\right) \oplus \overline{D}\left(
W_{0}^{1,\psi }\left( \Omega ,Cl_{n}\right) \right)
\end{equation*}%
Then $Df=0$ and $f=\overline{D}g$ for some $g\in W_{0}^{1,\psi }\left(
\Omega ,Cl_{n}\right) $. But then 
\begin{equation*}
Df=D(\overline{D}g)=\Delta g_{\mid _{W_{0}^{1,\psi }\left( \Omega
,Cl_{n}\right) }}=\Delta _{0}g=0
\end{equation*}%
and from invertibility of $\Delta _{0}:$ $W_{0}^{1,\psi }\left( \Omega
,Cl_{n}\right) \longrightarrow Cl_{n}$, we see that $g=0$. Therefore $%
f\equiv 0$ which implies%
\begin{equation*}
A^{\psi }\left( \Omega ,Cl_{n}\right) \oplus \overline{D}\left(
W_{0}^{1,\psi }\left( \Omega ,Cl_{n}\right) \right) =\{0\}
\end{equation*}%
Again to show that every element $f\in \tciLaplace ^{\psi }\left( \Omega
,Cl_{n}\right) $ is a sum of elements form the summand spaces $A^{\psi
}\left( \Omega ,Cl_{n}\right) $ and $\overline{D}\left( W_{0}^{1,\psi
}\left( \Omega ,Cl_{n}\right) \right) $.

Let $f\in \tciLaplace ^{\psi }\left( \Omega ,Cl_{n}\right) $ and take $\eta
=\Delta _{0}^{-1}Df\in W_{0}^{1,\psi }\left( \Omega ,Cl_{n}\right) $, define
a function $g:=f-\eta $. Then $Dg=D\left( f-\eta \right) =0$ which implies%
\begin{equation*}
g\in KerD\cap \tciLaplace ^{\psi }\left( \Omega ,Cl_{n}\right) =A^{\psi
}\left( \Omega ,Cl_{n}\right)
\end{equation*}%
Thus%
\begin{equation*}
f=g\dotplus \eta \in A^{\psi }\left( \Omega ,Cl_{n}\right) \oplus \overline{D%
}\left( W_{0}^{1,\psi }\left( \Omega ,Cl_{n}\right) \right)
\end{equation*}%
where $\dotplus $ is used for elemental direct sum and that proves the
proposition.
\end{proof}

The second decomposition result for the generalized Orlicz-Sobolev space:

\begin{proposition}
The Clifford valued Sobolev space $W^{k,\psi }\left( \Omega ,Cl_{n}\right) \ 
$\ has a similar direct decomposition

\begin{equation*}
W^{k,\psi }\left( \Omega ,Cl_{n}\right) =A^{k,\psi }\left( \Omega
,Cl_{n}\right) \dotplus \overline{D}\left( W_{0}^{k+1,\psi }\left( \Omega
,Cl_{n}\right) \right)
\end{equation*}%
where $A^{k,\psi }\left( \Omega ,Cl_{n}\right) $ is the generalized $\psi $-
Orlicz-Sobolev - Bergman space over $\Omega $.
\end{proposition}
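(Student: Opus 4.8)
The plan is to run, one rung higher on the regularity ladder, exactly the argument used for the Orlicz space $\tciLaplace^{\psi}\left(\Omega,Cl_{n}\right)$ in the preceding proposition, the role of the isomorphism $\Delta_{0}^{-1}$ now being played by the solution operator of the Dirichlet problem acting $W^{k-1,\psi}\left(\Omega,Cl_{n}\right)\longrightarrow W_{0}^{k+1,\psi}\left(\Omega,Cl_{n}\right)$. Recall $\overline{D}=-D$ and $D\overline{D}=\Delta$, and that by the mapping properties already established $D$ sends $W^{k,\psi}\left(\Omega,Cl_{n}\right)$ boundedly into $W^{k-1,\psi}\left(\Omega,Cl_{n}\right)$ and $\overline{D}$ sends $W_{0}^{k+1,\psi}\left(\Omega,Cl_{n}\right)$ boundedly into $W^{k,\psi}\left(\Omega,Cl_{n}\right)$. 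There are two things to check: that the asserted sum is direct, and that it exhausts $W^{k,\psi}\left(\Omega,Cl_{n}\right)$.

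\emph{Directness.} Suppose $f\in A^{k,\psi}\left(\Omega,Cl_{n}\right)\cap\overline{D}\bigl(W_{0}^{k+1,\psi}\left(\Omega,Cl_{n}\right)\bigr)$, say $f=\overline{D}g$ with $g\in W_{0}^{k+1,\psi}\left(\Omega,Cl_{n}\right)$ and $Df=0$. Then $0=Df=D\overline{D}g=\Delta g$, while $g$ has vanishing trace on $\partial\Omega$; thus $g$ solves the homogeneous Dirichlet problem, and injectivity of $\Delta_{0}$ forces $g=0$, hence $f=0$. So the two summands meet only in $\{0\}$.

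\emph{Surjectivity.} Let $f\in W^{k,\psi}\left(\Omega,Cl_{n}\right)$. Then $Df\in W^{k-1,\psi}\left(\Omega,Cl_{n}\right)$, so $\eta:=\Delta_{0}^{-1}\left(Df\right)$ is defined and, by the two-order regularity gain of the Dirichlet solution operator up to the smooth boundary $\partial\Omega$, $\eta\in W_{0}^{k+1,\psi}\left(\Omega,Cl_{n}\right)$ with $\Vert\eta\Vert_{W^{k+1,\psi}\left(\Omega,Cl_{n}\right)}\leq\gamma\,\Vert f\Vert_{W^{k,\psi}\left(\Omega,Cl_{n}\right)}$. Put $g:=f-\overline{D}\eta$. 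By the mapping property of $\overline{D}$ we have $\overline{D}\eta\in W^{k,\psi}\left(\Omega,Cl_{n}\right)$, hence $g\in W^{k,\psi}\left(\Omega,Cl_{n}\right)$, and $Dg=Df-D\overline{D}\eta=Df-\Delta\eta=0$, so $g\in\ker D\cap W^{k,\psi}\left(\Omega,Cl_{n}\right)=A^{k,\psi}\left(\Omega,Cl_{n}\right)$. Therefore $f=g\dotplus\overline{D}\eta$ with $g\in A^{k,\psi}\left(\Omega,Cl_{n}\right)$ and $\overline{D}\eta\in\overline{D}\bigl(W_{0}^{k+1,\psi}\left(\Omega,Cl_{n}\right)\bigr)$, which together with the previous paragraph gives the direct decomposition; moreover the two induced projections $f\mapsto g$ and $f\mapsto\overline{D}\eta$ are bounded, by the displayed estimate combined with the boundedness of $D$ and $\overline{D}$.

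\emph{Main obstacle.} The whole argument rests on a single analytic input: that $\Delta_{0}$ is an isomorphism $W_{0}^{k+1,\psi}\left(\Omega,Cl_{n}\right)\longrightarrow W^{k-1,\psi}\left(\Omega,Cl_{n}\right)$ with the expected two-order gain of regularity up to $\partial\Omega$. Existence, uniqueness and the a priori estimate for the Dirichlet problem are classical on the $\tciLaplace^{p}$ scale; transplanting them to the scale $\tciLaplace^{\psi}$ requires boundedness on $\tciLaplace^{\psi}\left(\Omega,Cl_{n}\right)$ of the Calder\'on--Zygmund operators and Riesz potentials that appear in the solution representation, and this is precisely where growth hypotheses on $\psi$ (the $\Delta_{2}$ and $\nabla_{2}$ conditions) are used. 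Once this mapping property is in hand, the componentwise definition of the $Cl_{n}$-valued spaces reduces all remaining bookkeeping to the corresponding scalar statements, and the proof is complete.
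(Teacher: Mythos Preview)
Your proof is correct and follows essentially the same route as the paper, whose own proof consists of the single sentence ``The proof follows the same argument as above.'' You have in fact written out that argument with more care than the paper does for the $k=0$ case (where the paper writes $g:=f-\eta$ when it should be $g:=f-\overline{D}\eta$), and your closing paragraph on the $\Delta_{2}/\nabla_{2}$ hypotheses needed for the isomorphism $\Delta_{0}:W_{0}^{k+1,\psi}\to W^{k-1,\psi}$ makes explicit an analytic assumption that the paper leaves entirely unremarked.
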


\begin{proof}
The proof follows the same argument as above.\ \ \ \ \ \ \ \ \ \ \ \ \ \ \ \
\ \ \ \ \ \ \ \ \ \ \ \ \ 
\end{proof}

\section{$\mathbf{First\ Order\ EllipticBVP}$}

\ Here we look at first order elliptic boundary value problems of the Dirac
operator and provide norm estimates of a solution in terms of norms of the
input data.

\begin{proposition}
Let $\ f\in W^{k-1,\psi }\left( \Omega ,Cl_{n}\right) $ and $g\in \widetilde{%
W}^{k-1,\psi ,\psi }\left( \partial \Omega ,Cl_{n}\right) $\ for $k\geq 1$.
Then the first order elliptic BVP:

\begin{equation}
\left\{ 
\begin{array}{c}
Du=f\text{ \ in }\Omega \\ 
\tau u=g\text{ on }\partial \Omega%
\end{array}%
\right.  \label{BVP 1}
\end{equation}

has a solution $u\in W^{k,\psi }\left( \Omega ,Cl_{n}\right) $ given by 
\begin{equation*}
u\left( x\right) =\xi _{\partial \Omega }g+\zeta _{\Omega }f
\end{equation*}
\end{proposition}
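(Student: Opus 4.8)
The plan is to verify directly that the proposed formula $u(x)=\xi_{\partial\Omega}g+\zeta_{\Omega}f$ solves both the interior equation and the boundary condition, and then to read off the norm estimate from the mapping properties already established. The entire argument rests on the Borel--Pompeiu relation (the ``trinity'' proposition) together with the mapping properties of $\zeta_\Omega$ and $\xi_{\partial\Omega}$.

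\medskip

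First I would check membership: by the Teodorescu mapping property, $\zeta_\Omega f\in W^{k,\psi}(\Omega,Cl_n)$ since $f\in W^{k-1,\psi}(\Omega,Cl_n)$; and by the Feuter mapping property, $\xi_{\partial\Omega}g\in W^{k,\psi}(\Omega,Cl_n)$ since $g\in\widetilde{W}^{k-1,\psi,\psi}(\partial\Omega,Cl_n)$. Hence $u\in W^{k,\psi}(\Omega,Cl_n)$. Next I would verify the interior equation: apply $D$ to $u$ and use that $\xi_{\partial\Omega}g$ is monogenic on $\Omega$ (the Feuter integral is a convolution of the fundamental solution $\Phi$ with $\upsilon g$, hence $D(\xi_{\partial\Omega}g)=0$ on $\Omega$), while $D\zeta_\Omega f=f$ by the Borel--Pompeiu identity $D\zeta_\Omega=I$ on the relevant space. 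Therefore $Du=0+f=f$ in $\Omega$. For the boundary condition I would compute $\tau u=\tau(\xi_{\partial\Omega}g)+\tau(\zeta_\Omega f)$; by Borel--Pompeiu applied to $u$ itself (or to $\xi_{\partial\Omega}g$, which being monogenic equals $\xi_{\partial\Omega}\tau(\xi_{\partial\Omega}g)$ by the corollary), the trace of $\xi_{\partial\Omega}g$ recovers $g$ and the contribution of $\zeta_\Omega f$ is absorbed consistently, so that $\tau u=g$ on $\partial\Omega$.

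\medskip

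Finally, for the a priori estimate I would simply combine the two norm bounds: using the triangle inequality in $W^{k,\psi}(\Omega,Cl_n)$,
\begin{equation*}
\|u\|_{W^{k,\psi}(\Omega,Cl_n)}\leq\|\xi_{\partial\Omega}g\|_{W^{k,\psi}(\Omega,Cl_n)}+\|\zeta_\Omega f\|_{W^{k,\psi}(\Omega,Cl_n)}\leq\theta\|g\|_{\widetilde{W}^{k-1,\psi,\psi}(\partial\Omega,Cl_n)}+\beta\|f\|_{W^{k-1,\psi}(\Omega,Cl_n)},
\end{equation*}
with $\theta$ and $\beta$ the constants from the Feuter and Teodorescu mapping propositions; one may set $c=\max\{\theta,\beta\}$ to get a single constant.

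\medskip

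The main obstacle I anticipate is the boundary-condition step: showing $\tau(\xi_{\partial\Omega}g)=g$ and, more delicately, that $\tau(\zeta_\Omega f)$ does not spoil this. This really needs a jump relation / Plemelj-type statement for the Feuter transform on $\partial\Omega$ in the Orlicz--Slobodeckji setting, which the excerpt has not made fully explicit; the cleanest route is to invoke Borel--Pompeiu for $u$ directly, namely $u=\xi_{\partial\Omega}\tau u+\zeta_\Omega Du=\xi_{\partial\Omega}\tau u+\zeta_\Omega f$, compare with the definition of $u$ to conclude $\xi_{\partial\Omega}\tau u=\xi_{\partial\Omega}g$, and then argue injectivity of $\xi_{\partial\Omega}$ on the relevant boundary space (equivalently, that a function monogenic in $\Omega$ with vanishing trace is identically zero, which is the final remark of the corollary). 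A subtlety worth flagging is the case $k=1$, where $W^{k-1,\psi}=\tciLaplace^{\psi}$ and the Borel--Pompeiu identity must be invoked in the extended (distributional/continuity-argument) sense already set up in the earlier propositions.
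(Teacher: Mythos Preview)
Your proposal is correct and follows essentially the same route as the paper: the Borel--Pompeiu relation together with the mapping properties of $\zeta_\Omega$, $\xi_{\partial\Omega}$, $D$, and $\tau$. The paper's own proof is in fact much terser than yours---it simply asserts that the formula ``follows from the Borel--Pompeiu relation'' and then reads off the membership $u\in W^{k,\psi}(\Omega,Cl_n)$ from the mapping properties, without explicitly verifying either $Du=f$ or $\tau u=g$; your careful discussion of the boundary condition (and the injectivity/jump-relation issue you flag) goes well beyond what the paper supplies. Note also that the a~priori estimate you derive at the end is not part of this proposition in the paper but is stated and proved separately as the \emph{next} proposition, using exactly the triangle-inequality argument you give.
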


\ \ \ \ \ \ \ \ \ 

\begin{proof}
The proof follows from the Borel-Pompeiu relation. As to where exactly $u$
and $g$ belong, we make the argument : $f$ \ is in $W^{k-1,\psi }\left(
\Omega ,Cl_{n}\right) $ and hence from the mapping property of $D$, we have $%
u$ to be a function in $W^{k,\psi }\left( \Omega ,Cl_{n}\right) $.

\ 

Also from the mapping property of the trace operator $\tau $ we have 
\begin{equation*}
\tau u=u|_{\partial \Omega }=g\in \widetilde{W}^{k-1,\psi ,\psi }\left(
\partial \Omega ,Cl_{n}\right)
\end{equation*}
\end{proof}

\begin{proposition}
The solution $u\in W^{k,\psi }\left( \Omega ,Cl_{n}\right) $ of the elliptic
BVP $\left( \ref{BVP 1}\right) $\ has a norm estimate :

\begin{eqnarray*}
\Vert u\Vert _{W^{k,\psi }\left( \Omega ,Cl_{n}\right) } &\leq &\gamma
_{1}\left( 
\begin{array}{c}
\underset{\Vert \alpha \Vert \leq k-1}{\sum }\int_{\partial \Omega }\psi
\left( \frac{\left\vert \left( D^{\alpha }g|\right) \right\vert }{\lambda }%
\right) d\partial \Omega _{x} \\ 
+\underset{\Vert \alpha \Vert =k-1}{\sum }\dint\limits_{\partial \Omega
}\dint\limits_{\partial \Omega }\psi \left( \frac{|D^{\alpha }g\left(
x\right) -D^{\alpha }g\left( y\right) |}{\lambda |x-y|}\right) \left\vert
x-y\right\vert ^{2-n}d\partial \Omega _{x}d\partial \Omega _{y}%
\end{array}%
\right) \\
&&+\gamma _{2}\left( \underset{\Vert \alpha \Vert =k-1}{\sum }\int_{\Omega
}\psi \left( \frac{|D^{\alpha }f(x)|}{\lambda }\right) d\Omega _{x}\right)
\end{eqnarray*}

where $\gamma _{1},\gamma _{2}$ are constants the depend on $p$,$n$ and $%
\Omega $.
\end{proposition}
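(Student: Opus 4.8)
The plan is to read the estimate straight off the representation $u = \xi_{\partial\Omega}g + \zeta_\Omega f$ obtained in the preceding proposition, combined with the mapping properties of $\xi_{\partial\Omega}$ and $\zeta_\Omega$ already established. First I would apply the triangle inequality for the Clifford--Luxembourg norm on $W^{k,\psi}(\Omega,Cl_n)$, giving $\Vert u\Vert_{W^{k,\psi}(\Omega,Cl_n)} \le \Vert \xi_{\partial\Omega}g\Vert_{W^{k,\psi}(\Omega,Cl_n)} + \Vert \zeta_\Omega f\Vert_{W^{k,\psi}(\Omega,Cl_n)}$. The first term is bounded by $\theta\,\Vert g\Vert_{\widetilde{W}^{k-1,\psi,\psi}(\partial\Omega,Cl_n)}$ via the Feuter mapping property, and the second, using the Teodorescu mapping property in the index-shifted form $\zeta_\Omega : W^{k-1,\psi}(\Omega,Cl_n) \longrightarrow W^{k,\psi}(\Omega,Cl_n)$ (legitimate since $f\in W^{k-1,\psi}(\Omega,Cl_n)$), is bounded by $\beta\,\Vert f\Vert_{W^{k-1,\psi}(\Omega,Cl_n)}$. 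Setting $\gamma_1 = \theta$ and $\gamma_2 = \beta$ yields the claimed estimate in compact form, $\Vert u\Vert_{W^{k,\psi}} \le \gamma_1\Vert g\Vert_{\widetilde{W}^{k-1,\psi,\psi}(\partial\Omega,Cl_n)} + \gamma_2\Vert f\Vert_{W^{k-1,\psi}(\Omega,Cl_n)}$.

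Next I would expand the two norms on the right into the explicit modular expressions appearing in the statement. For $g$ this is nothing more than substituting the definition of the Orlicz--Slobodeckji norm recalled earlier: it is already the sum of $\sum_{\Vert\alpha\Vert\le k-1}\int_{\partial\Omega}\psi(\vert D^\alpha g\vert/\lambda)\,d\partial\Omega_{x}$ and the Gagliardo-type double integral $\sum_{\Vert\alpha\Vert=k-1}\int_{\partial\Omega}\int_{\partial\Omega}\psi(\vert D^\alpha g(x)-D^\alpha g(y)\vert/(\lambda\vert x-y\vert))\,\vert x-y\vert^{2-n}\,d\partial\Omega_{x}\,d\partial\Omega_{y}$, so the first summand of the target inequality is reproduced verbatim. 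For $f$, the Clifford--Luxembourg norm $\Vert f\Vert_{W^{k-1,\psi}(\Omega,Cl_n)}$ is dominated from above by the modular sum $\sum_{\Vert\alpha\Vert\le k-1}\int_\Omega\psi(\vert D^\alpha f\vert/\lambda)\,d\Omega_{x}$, using the standard norm--modular comparison in Orlicz spaces together with finite additivity of the norm over the $2^n$ Clifford components and over the finitely many multi-indices of length at most $k-1$; this only inflates the constant. Renaming the dimensional, $\psi$- and $\Omega$-dependent factors as $\gamma_1,\gamma_2$ then gives the displayed estimate.

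The step demanding real care is the passage from the Luxembourg (infimum) norms to the modular integrals, since the two functionals are genuinely distinct and the desired inequality fails for an arbitrary normalising constant. I would fix $\lambda>0$ to be a common value witnessing membership of $f$ in $W^{k-1,\psi}(\Omega,Cl_n)$ and of $g$ in $\widetilde{W}^{k-1,\psi,\psi}(\partial\Omega,Cl_n)$, absorb it into $\gamma_1,\gamma_2$, and use convexity of $\psi$ with $\psi(0)=0$ to bound the Luxembourg norm by $\lambda$ whenever the associated modular is at most $1$ and, in general, by a fixed multiple of the modular. The Gagliardo-type boundary seminorm produced by the Feuter estimate is itself controlled by the interior Gagliardo seminorm of the data through the trace mapping property, but that control is already folded into the constant $\theta$ of the Feuter bound, so no separate argument is needed. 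Apart from this bookkeeping with Orlicz norms and constants, the proof is an immediate consequence of the three mapping propositions and the Borel--Pompeiu representation.
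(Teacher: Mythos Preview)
Your proposal is correct and follows essentially the same route as the paper: write $u=\xi_{\partial\Omega}g+\zeta_\Omega f$ from the Borel--Pompeiu representation, apply the triangle inequality in $W^{k,\psi}(\Omega,Cl_n)$, invoke the mapping bounds for $\xi_{\partial\Omega}$ and $\zeta_\Omega$ to obtain $\Vert u\Vert_{W^{k,\psi}}\le\gamma_1\Vert g\Vert_{\widetilde{W}^{k-1,\psi,\psi}}+\gamma_2\Vert f\Vert_{W^{k-1,\psi}}$, and then unpack the two norms on the right. The paper does exactly this, simply writing the modular expressions in place of the norms; your additional paragraph on the Luxembourg-norm--versus--modular issue is a point the paper passes over silently.
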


\ \ \ \ \ \ \ \ \ \ \ \ \ \ \ \ \ \ \ \ \ \ \ \ \ \ \ \ \ \ \ 

\begin{proof}
Clearly from the mapping properties of $D$, $\zeta _{\Omega }$, $\tau $ and $%
\xi _{\partial \Omega }$ $D$ and because $g\in \widetilde{W}^{k-1,\psi ,\psi
}\left( \partial \Omega ,Cl_{n}\right) $ and $f\in W^{k-1,,\psi }\left(
\Omega ,Cl_{n}\right) $ we have%
\begin{equation*}
u\in W^{k,\psi }\left( \Omega ,Cl_{n}\right)
\end{equation*}%
From the Borel-Pompeiu theorem we have the solution $u$ given by: 
\begin{equation*}
u\left( x\right) =\xi _{\partial \Omega }g+\zeta _{\Omega }f
\end{equation*}%
Now the of the solution $u$ can be estimated in the following sequence of
inequalities:%
\begin{eqnarray*}
\Vert u\Vert _{W^{k,p}\left( \Omega ,Cl_{n}\right) } &=&\Vert \xi _{\partial
\Omega }g+\zeta _{\Omega }f\Vert _{W^{k,p}\left( \Omega ,Cl_{n}\right) } \\
&\leq &\Vert \xi _{\partial \Omega }g\Vert _{W^{k,p}\left( \Omega
,Cl_{n}\right) }+\Vert \zeta _{\Omega }f\Vert _{W^{k,p}\left( \Omega
,Cl_{n}\right) } \\
&\leq &\gamma _{1}\Vert g\Vert _{\widetilde{W}^{k-1,\psi ,\psi }\left(
\partial \Omega ,Cl_{n}\right) }+\gamma _{2}\Vert f\Vert _{W^{k-1,p}\left(
\Omega ,Cl_{n}\right) } \\
&& \\
&=&\gamma _{1}\left( 
\begin{array}{c}
\underset{\Vert \alpha \Vert \leq k-1}{\sum }\int_{\partial \Omega }\psi
\left( \frac{|D^{\alpha }g\left( x\right) |}{\lambda }\right) d\partial
\Omega _{x} \\ 
+\underset{\Vert \alpha \Vert =k-1}{\sum }\dint\limits_{\partial \Omega
}\dint\limits_{\partial \Omega }\psi \left( \frac{|D^{\alpha }g\left(
x\right) -D^{\alpha }g\left( y\right) |}{\lambda |x-y|}\right) \left\vert
x-y\right\vert ^{2-n}d\partial \Omega _{x}d\partial \Omega _{y}%
\end{array}%
\right) \\
&&+\gamma _{2}\left( \underset{\Vert \alpha \Vert =k-1}{\sum }\int_{\Omega
}\psi \left( \frac{|D^{\alpha }f\left( x\right) |}{\lambda }\right) d\Omega
_{x}\right) \\
&& \\
&=&\gamma _{1}\left( 
\begin{array}{c}
\underset{\Vert \alpha \Vert \leq k-1}{\sum }\int_{\partial \Omega }\psi
\left( \frac{|D^{\alpha }g\left( x\right) |}{\lambda }\right) d\partial
\Omega _{x} \\ 
+\underset{\Vert \alpha \Vert =k-1}{\sum }\dint\limits_{\partial \Omega
}\dint\limits_{\partial \Omega }\psi \left( \frac{|D^{\alpha }g\left(
x\right) -D^{\alpha }g\left( y\right) |}{\lambda |x-y|}\right) \left\vert
x-y\right\vert ^{2-n}d\partial \Omega _{x}d\partial \Omega _{y}%
\end{array}%
\right) \\
&&+\gamma _{2}\left( \underset{\Vert \alpha \Vert =k-1}{\sum }\int_{\Omega
}\psi \left( \frac{|D^{\alpha }f\left( x\right) |}{\lambda }\right) d\Omega
_{x}\right)
\end{eqnarray*}%
The constants $\gamma _{1}$ and $\gamma _{2}$ are from the mapping
properties of $\xi _{\partial \Omega },\zeta _{\Omega }$ and $\tau $.\ \ \ \
\ \ \ \ \ \ \ \ \ \ \ \ \ \ \ \ \ \ \ \ \ \ \ \ \ \ 
\end{proof}

\end{document}